 \def\rr{\mathbb{R}}
\def\zz{\mathbb{Z}}
\def\nn{\mathbb{N}}
\def\cp{\mathcal{P}}
\def\cf{\mathcal{F}}
\def\cs{\mathcal{S}}
\def\supp{{\rm{\ supp\ }}}
\def\rn{\mathbb{R}^n}
\def\zn{\mathbb{Z}^n}
\def\no{\nonumber}
\def\les{\lesssim}
\newtheorem{thm}{Theorem}[section]
\newtheorem{rem}[thm]{Remark}
\newtheorem{lem}[thm]{Lemma}
\newtheorem{cor}[thm]{Corollary}
\newtheorem{defn}[thm]{Definition}
\def\XXint#1#2#3{{
\setbox0=\hbox{$#1{#2#3}{\int}$}
\vcenter{\hbox{$#2#3$}}\kern-.5\wd0}}
\numberwithin{equation}{section}
\begin{document}
\title{The Weighted Grand Herz-Morrey-Lizorkin-Triebel Spaces with Variable Exponents    \footnotetext{$\ast$ The corresponding author J. S. Xu  jingshixu@126.com}}
\author{Shengrong Wang\textsuperscript{1}, Pengfei Guo\textsuperscript{1}, Jingshi Xu\textsuperscript{2,3,4*}\\
{\scriptsize  \textsuperscript{1} School of Mathematics and Statistics, Hainan Normal University, Haikou, 571158, China }\\
{\scriptsize  \textsuperscript{2}School of Mathematics and Computing Science, Guilin University of Electronic Technology, Guilin 541004, China} \\
{\scriptsize  \textsuperscript{3} Center for Applied Mathematics of Guangxi (GUET), Guilin 541004, China}\\
{\scriptsize  \textsuperscript{4}Guangxi Colleges and Universities Key Laboratory of Data Analysis and Computation, Guilin 541004, China}}
\date{}

\maketitle

{\bf Abstract.} Let a vector-valued sublinear operator satisfy the size condition and be bounded on weighted Lebesgue spaces with variable exponent. Then we obtain its boundedness on weighted grand Herz-Morrey spaces with variable exponents. Next we introduce weighted grand Herz-Morrey-Triebel-Lizorkin spaces with variable exponents and provide their equivalent quasi-norms via maximal functions.

 {\bf Key words and phrases.} subilinear operator, vector-valued inequality, Muckenhoupt weight, variable exponent, grand Herz-Morrey space,  Triebel-Lizorkin space
 
{\bf Mathematics Subject Classification (2020).} 42B25, 46E30, 46E35

\section{Introduction}
The theory of variable exponent function spaces has been rapidly developed after Kov\'a\v{c}ik and R\'akosn\'{\i}k \cite{kr-1}  gave  basic properties of Lebesgue spaces with  variable exponent.
In particular, the boundedness of the Hardy-littlewood maximal operator on the variable exponent Lebesgue spaces, was studied in \cite{cfj-2,dl-3,ne1,dl-4,dhhm1}.
Cruz-Uribe, Fiorenza and Neugebauer \cite{cruz-1} extended the classic Muckenhoupt $A_p$ weight in \cite{muc} to variable exponent and proved the equivalence between the $A_{p (\cdot)}$ weight condition and the boundedness of the Hardy Littlewood maximal operator on the correspondingly weighted Lebesgue space with variable exponent.

The boundedness of some sublinear operators, which include the Hardy-Littlewood maximal operator, was considered by many authors recently. Indeed, 
the boundedness of some sublinear operators on weighted variable Herz-Morrey spaces was obtained by Wang and Shu in \cite{wang}. Then the boundedness of vector-valued sublinear operators on weighted Herz-Morrey spaces with variable exponents was proved in \cite{wx3}. Next, 
the grand variable Herz spaces were introduced, and the boundedness of sublinear operators on them was obtained by 
Nafis, Rafeiro and Zaighum in \cite{nrz1}. Finally, The weighted grand Herz-Morrey type spaces were introduced, and the boundedness of sublinear operators and their multilinear commutators on them was obtained by Zhang, He and Zhang in \cite{zhz1}.

Mixing together Herz spaces, Besov spaces and Triebel-Lizorkin spaces, Yang and the third author of the paper \cite{xy-5,xu-6} introduced the Herz-type Besov spaces and Triebel-Lizorkin spaces.
Then, Shi and the third author of the paper \cite{sx-1} extended them to variable exponent  and obtained their equivalent quasi-norms.

Motivated by the mentioned works, in this paper, we consider the boundedness of vector-valued sublinear operators on weighted grand Herz-Morrey spaces with variable exponents, and introduce
weighted grand Herz-Morrey-Triebel-Lizorkin spaces with variable exponents.
The plan of the paper is as follows. 
In Section \ref{wbl-s-1}, we collect some notations. 
In Section \ref{wbl-s-2}, we give the boundedness of vector-valued sublinear operators on weighted grand Herz-Morrey spaces with variable exponents.
In Section \ref{wbl-s-3}, we introduce weighted grand Herz-Morrey-Triebel-Lizorkin spaces with variable exponent, and obtain their equivalent quasi-norms  by Peetre’s maximal operators.

Now, we make some conventions on notation. Let $\nn$ be the collection of all natural numbers and $\nn_0 = \nn \cup \{0\}$. Let $\zz$ be the collection of all integers.
Let $\rn$ be the $n$-dimensional Euclidean space, where $n \in \nn$. In the sequel, $C$ denotes positive constants, but it may change from line to line. For any quantities $A$ and $B$, if there exists a constant $C>0$ such that $A\leq CB$, we write $A \lesssim B$. If $A\lesssim B$ and $B \lesssim A$, we write $A \sim B$.
For each $k \in \mathbb{Z}$, we define
${B_k}: = \{ x \in \rn:| x | \leq {2^k} \},$ $D_k: = B_k\backslash B_{k - 1}=\{ x\in \rn: 2^{k-1}<|x| \leq 2^k\},$ $\chi _k: = \chi _{D_k},$ $\widetilde{\chi}_m=\chi_{m},$
$m \geq 1,$ $\widetilde{\chi}_0=\chi_{B_0}.$
 For a measurable function $q(\cdot)$ on $\rn$ taking values in $\ (0,\infty)$, we denote ${q^- }: =  {\rm ess}\inf_{x \in \rn} p(x),$ ${q^+ }: =  {\rm ess} \sup_{x \in\rn} q(x).$
The set $\cp(\rn)$ consists of all $q(\cdot)$ satisfying $q^->1$ and $q^+<\infty;$
$\cp_{0}(\rn)$ consists of all $q(\cdot)$ satisfying $q^->0$ and $q^+<\infty$.
\section{Notations and preliminaries}\label{wbl-s-1} 
In this section, we first recall some definitions and notations. 
Let $q(\cdot) \in \cp_0(\rn)$.
Then the Lebesgue space with variable exponent $L^{q(\cdot)}(\rn)$ is defined
to be the set of all measurable functions $f$ such that
\[\| f \|_{L^{q(\cdot)}(\rn)}: = \inf \bigg\{ \lambda  > 0:\int_{\rn} \bigg( \frac{|f(x)|}{\lambda } \bigg)^{q(x)}{\rm d} x \leq 1  \bigg\}<\infty.\]

 Let $p(\cdot) \in \mathcal{P}(\rn)$ and $w$ be a weight which is a nonnegative measurable function on $\rn$.
Then the weighted variable exponent Lebesgue space $L^{q(\cdot)}(w)$ is  the set of all complex-valued measurable function $f$
such that $f w \in L^{q(\cdot)}(\rn)$. The space $L^{q(\cdot)}(w)$ is a Banach space equipped with the norm
\[\|f\|_{L^{q(\cdot)}(w)}:=\|f w\|_{L^{q(\cdot)}(\rn)}.\]

Let $q(\cdot) \in \cp_0(\rn)$. The set $L^{q(\cdot)} _{\rm loc}(w)$ consists of all measurable functions $f$ such that $f\chi_{K} \in L^{q(\cdot)}(w)(\rn)$ for all compact subsets $K \subset \rn$.
Denote by $L^\infty(\rn)$ the set of all measurable functions $f$ such that
$ \|f\|_{L^\infty} := \mathop{{\rm ess}\sup}_{y \in \rn} |f(y)| < \infty.$

Let $f\in L^{1}_{\rm loc}(\rn)$. Then the standard Hardy-Littlewood maximal function of $f$ is defined by
\[\mathcal{M}f(x) := \sup_{B \ni x} \frac{1}{| B |}\int_B |f(y)|{\rm d}y , \ \forall x \in \mathbb{R}^n,\]
where the supremum is taken over all balls containing $x$ in $\rn$. 
We also define 
$\mathcal{M}_t(f) :=(\mathcal{M}(|f|^t))^{1/t},$ $t \in (0,\infty).$

\begin{defn}\label{cd-1}
Let $\alpha(\cdot)$ be a real-valued measurable function on $\rn$.

{\rm (i)} The function $\alpha(\cdot)$ is locally $\log$-H\"{o}lder continuous if there exists a constant $C_{\log}(\alpha)$ such that
\[|\alpha (x) - \alpha (y)| \leq \frac{C_{\log}(\alpha)}{\log ( e + 1/|x - y|)}, \ x,y \in \mathbb{R}^n,\ |x - y| < \frac{1}{2}.\]
Denote by $C_{\rm loc}^{\log}(\rn)$ the set of all locally log-H\"{o}lder continuous

{\rm (ii)} The function $\alpha(\cdot)$ is $\log$-H\"{o}lder continuous at the origin if there exists a constant $C_0$ such that
\[|\alpha (x) - \alpha (0)| \leq \frac{C_0} {\log ( e + 1/| x | )}, \ \forall x \in \rn.\]
Denote by $C^{\log}_0(\rn)$ the set of all $\log$-H\"{o}lder continuous functions at the origin.

{\rm (iii)} The function $\alpha(\cdot)$ is $\log$-H\"{o}lder continuous at infinity if there exists $\alpha_{\infty}\in \mathbb{R}$ and a constant $C_\infty$ such that
\[|\alpha (x) - \alpha _\infty|\leq \frac{C_\infty} {\log ( e + | x | )}, \ \forall x \in \rn.\]
Denote by $C^{\log}_\infty(\rn)$ the set of all $\log$-H\"{o}lder continuous functions at infinity.

{\rm (iv)} The function $\alpha(\cdot)$ is global $\log$-H\"{o}lder continuous if $\alpha(\cdot)$ are both locally $\log$-H\"{o}lder continuous and $\log$-H\"{o}lder continuous at infinity.
Denote by $C^{\log}(\rn)$ the set of all global $\log$-H\"{o}lder continuous functions.
\end{defn}

\begin{defn}\label{dg1}
Let $q(\cdot) \in \mathcal{P}(\rn)$, a nonnegative measurable function $w$ is said to be in $A_{q(\cdot)}$,
if 
\[ \|w\|_{A_{q(\cdot)}}:= \sup_{\text{all ball } B \subset \rn} \frac{1}{|B|}\|w\chi_B\|_{L^{q(\cdot)}(\rn)} \|w^{-1}\chi_B\|_{L^{q^{\prime}(\cdot)}(\rn)} < \infty.\]
\end{defn}

\begin{defn}\label{dg2}
Let $q(\cdot) \in \mathcal{P}(\mathbb{R}^n)$, a nonnegative measurable function $w$ is said to be in $\tilde{A}_{q(\cdot)}$,
if 
\[\|w\|_{\tilde{A}_{q(\cdot)}} := \sup_{\text{all ball } B \subset \rn} \frac{1}{|B|}\|w^{1/q(\cdot)}\chi_B\|_{L^{q(\cdot)}(\rn)} \|w^{-1/q(\cdot)}\chi_B\|_{L^{q^{\prime}(\cdot)}(\rn)}  < \infty.\]
\end{defn}

\begin{defn}  Let $w$ be weights on $\rn$, $q(\cdot)\in\cp(\rn),$ $p\in(1,\infty)$, $\lambda \in [0, \infty)$, $\alpha(\cdot) \in L^\infty(\rn)$ and $\theta>0$.
  The homogeneous  weighted grand Herz-Morrey space $M\dot{K}_{q(\cdot),\lambda}^{\alpha (\cdot),p),\theta}(w)$ is defined respectively by
\[M\dot{K}_{q(\cdot),\lambda}^{\alpha (\cdot),p),\theta}(w):= \Big\{f\in L^{q(\cdot)}_{\rm loc}(\rn \setminus \{0\},w):\|f\|_{M\dot{K}_{q(\cdot),\lambda}^{\alpha (\cdot),p),\theta}(w)}<\infty\Big\},\]
where
\[\|f\|_{M\dot{K}^{\alpha(\cdot),p),\theta}_{q(\cdot),\lambda}(w)}:= \sup_{\delta>0} \sup_{k_0\in \zz}2^{-k_0\lambda} \bigg( \delta^\theta \sum_{k=-\infty}^{k_0} \big\| 2^{k\alpha(\cdot)} f\chi_k \big\|_{L^{q(\cdot)}(w)}^{p(1+\delta)} \bigg)^{\frac{1}{p(1+\delta)}}.\]
\end{defn}

\begin{lem}[see {\cite[Lemma 4.1]{zhz1}}]\label{ghm-L1}
Let $w$ be a weight on $\rn$, $q(\cdot)\in\cp(\rn),$ $p\in(1,\infty)$, $\lambda \in [0, \infty)$, $\alpha(\cdot) \in L^\infty(\rn)$ and $\theta>0$. If $\alpha(\cdot) \in C^{\log}_0(\rn)\cap C^{\log}_{\infty}(\rn),$
then for all $f\in L^{q(\cdot)}_{\rm loc}(\rn\backslash\{0\},w),$
\begin{align*}
\|f\|_{M\dot{K}^{\alpha(\cdot),p),\theta}_{q(\cdot),\lambda}(w)} &
\sim \max \Bigg\{ \sup_{\delta>0} \sup_{k_0\leq 0,k_0\in \zz}2^{-k_0\lambda} \bigg( \delta^\theta \sum_{k=-\infty}^{k_0} 2^{k\alpha(0)p(1+\delta)} \big\| f\chi_k \big\|_{L^{q(\cdot)}(w)}^{p(1+\delta)} \bigg)^{\frac{1}{p(1+\delta)}}, \\
&   \sup_{\delta>0} \sup_{k_0 > 0,k_0\in \zz}2^{-k_0\lambda} \bigg( \delta^\theta \sum_{k=-\infty}^{-1} 2^{k\alpha(0)p(1+\delta)} \big\| f\chi_k \big\|_{L^{q(\cdot)}(w)}^{p(1+\delta)} \\
& \quad + \delta^\theta \sum_{k=0}^{k_0} 2^{k\alpha_\infty p(1+\delta)} \big\| f\chi_k \big\|_{L^{q(\cdot)}(w)}^{p(1+\delta)} \bigg)^{\frac{1}{p(1+\delta)}} \Bigg\}.
\end{align*}
\end{lem}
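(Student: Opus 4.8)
The plan is to prove the equivalence by splitting the supremum over $k_0 \in \zz$ into the two ranges $k_0 \leq 0$ and $k_0 > 0$, and on each range replacing the variable power $2^{k\alpha(\cdot)}$ (inside the $L^{q(\cdot)}(w)$-quasi-norm of $f\chi_k$, where the relevant annulus is $D_k$) by the constant power $2^{k\alpha(0)}$ when $k \leq 0$ and by $2^{k\alpha_\infty}$ when $k \geq 0$. The left-hand side is exactly $\sup_{\delta>0}\sup_{k_0\in\zz}$ of the quantity in the definition, and $\sup_{k_0\in\zz} = \max\{\sup_{k_0\leq 0},\sup_{k_0>0}\}$; for $k_0 > 0$ one further breaks the inner sum $\sum_{k=-\infty}^{k_0}$ as $\sum_{k=-\infty}^{-1} + \sum_{k=0}^{k_0}$, which produces the two-term structure on the right. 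So the whole statement reduces to the pointwise (in $k$) comparison
\[
2^{k\alpha(0)}\|f\chi_k\|_{L^{q(\cdot)}(w)} \sim \|2^{k\alpha(\cdot)}f\chi_k\|_{L^{q(\cdot)}(w)} \quad (k \leq 0),
\]
\[
2^{k\alpha_\infty}\|f\chi_k\|_{L^{q(\cdot)}(w)} \sim \|2^{k\alpha(\cdot)}f\chi_k\|_{L^{q(\cdot)}(w)} \quad (k \geq 0),
\]
with comparison constants independent of $k$ and of $f$; once these hold, one multiplies through by $2^{-k_0\lambda}(\delta^\theta\,\cdot\,)^{1/(p(1+\delta))}$, sums in $k$, and takes suprema in $k_0$ and $\delta$, and the two-sided bounds are preserved throughout since all the operations are monotone and the constants are uniform.

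The key step is therefore the uniform comparison of $\|2^{k\alpha(\cdot)}f\chi_k\|_{L^{q(\cdot)}(w)}$ with $2^{k\alpha(0)}\|f\chi_k\|_{L^{q(\cdot)}(w)}$ on $D_k$ for $k\le 0$. On $D_k$ one has $2^{k-1} < |x| \leq 2^k$, so for $k \leq 0$ every $x \in D_k$ satisfies $|x| \leq 1$ and $1/|x| \geq 2^{-k}$; the hypothesis $\alpha(\cdot) \in C^{\log}_0(\rn)$ gives $|\alpha(x) - \alpha(0)| \leq C_0/\log(e + 1/|x|) \leq C_0/\log(e + 2^{k-1})$, which for $k \leq 0$ is bounded by a fixed constant (indeed it tends to $0$ as $k \to -\infty$). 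Hence $2^{k(\alpha(x)-\alpha(0))} = 2^{k\alpha(x)}/2^{k\alpha(0)}$ lies between two positive constants independent of $k$ and of $x \in D_k$, which yields the pointwise two-sided bound $2^{k\alpha(0)}|f(x)|\chi_k(x) \sim 2^{k\alpha(x)}|f(x)|\chi_k(x)$ and hence, by the lattice property (monotonicity) of the quasi-norm $\|\cdot\|_{L^{q(\cdot)}(w)}$, the desired norm comparison. For $k \geq 0$, on $D_k$ one has $|x| > 2^{k-1} \geq 2^{-1}$, so $\alpha(\cdot) \in C^{\log}_\infty(\rn)$ gives $|\alpha(x) - \alpha_\infty| \leq C_\infty/\log(e + |x|) \leq C_\infty/\log(e + 2^{k-1})$, again uniformly bounded in $k \geq 0$ and $x \in D_k$, and the same argument applies with $\alpha_\infty$ in place of $\alpha(0)$.

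The main obstacle — really the only nontrivial point — is making the uniformity in $k$ completely precise: one must check that the bound on $|k|\cdot|\alpha(x)-\alpha(0)|$ (the exponent of $2$ that controls the ratio) stays bounded as $k \to -\infty$, i.e. that $|k|/\log(e + 2^{k-1})$ is bounded for $k \leq 0$, and similarly $k/\log(e+2^{k-1})$ for $k\ge 0$; both follow from $\log(e+2^{m}) \sim m$ for large $m$, so $2^{k(\alpha(x)-\alpha(0))}$ is controlled by $2^{C_0|k|/\log(e+2^{|k|-1})} \leq 2^{C'}$ uniformly. With that in hand, the only remaining care is bookkeeping: keeping the constants out of the sums (they factor through since $p(1+\delta) \geq p > 1$ and the $\ell^{p(1+\delta)}$-quasi-norms scale by constants uniformly in $\delta$), and verifying that the reindexing $\sum_{k=-\infty}^{k_0} = \sum_{k=-\infty}^{-1} + \sum_{k=0}^{k_0}$ together with $\sup_{k_0\in\zz} = \max\{\sup_{k_0 \leq 0}, \sup_{k_0 > 0}\}$ assembles exactly into the right-hand side of the claimed equivalence. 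I would also remark that this is the natural variable-exponent analogue of the standard fact that Herz-type norms with a $\log$-Hölder exponent can be computed with the exponent frozen at $0$ and at $\infty$, so the argument parallels the proof of the corresponding statement in the unweighted or non-grand setting.
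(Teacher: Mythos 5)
The paper does not prove this lemma at all; it is quoted verbatim from \cite[Lemma 4.1]{zhz1}, so there is no in-paper argument to compare against. Your proposal is the standard (and surely the cited reference's) route: reduce to the uniform freezing $\|2^{k\alpha(\cdot)}f\chi_k\|_{L^{q(\cdot)}(w)}\sim 2^{k\alpha(0)}\|f\chi_k\|_{L^{q(\cdot)}(w)}$ for $k\le 0$ and $\sim 2^{k\alpha_\infty}\|f\chi_k\|_{L^{q(\cdot)}(w)}$ for $k\ge 0$ via the log-H\"older conditions and the lattice property of the Luxemburg norm, then push the constants through the $\ell^{p(1+\delta)}$ sums and the suprema in $k_0$ and $\delta$. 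That is correct, and your reduction of the two-branch right-hand side to $\sup_{k_0\in\zz}=\max\{\sup_{k_0\le 0},\sup_{k_0>0}\}$ plus the split $\sum_{k\le k_0}=\sum_{k\le -1}+\sum_{0\le k\le k_0}$ is exactly what is needed (the crossover at $k=0$ is harmless since $2^{0\cdot\alpha(0)}=2^{0\cdot\alpha_\infty}=1$).

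One slip should be fixed, because as literally written it breaks the load-bearing uniformity claim. For $x\in D_k$ with $k\le 0$ you have $1/|x|\ge 2^{-k}=2^{|k|}$, so the correct bound is $|\alpha(x)-\alpha(0)|\le C_0/\log(e+2^{|k|})$, whose denominator grows like $|k|$ and therefore kills the factor $|k|$ in the exponent. In two places you instead wrote $\log(e+2^{k-1})$; for $k\le 0$ this denominator is bounded (it tends to $\log e=1$ as $k\to-\infty$), so the displayed claim that $|k|/\log(e+2^{k-1})$ is bounded for $k\le 0$ is false as stated, and the parenthetical ``tends to $0$'' is false for that expression. Your final display uses the correct $2^{|k|-1}$, so this is a sign typo rather than a conceptual gap, but since the entire argument rests on $\log(e+2^{|k|})\gtrsim 1+|k|$ you should state it that way consistently. (The $k\ge 0$ branch, where $|x|>2^{k-1}$ and $\log(e+2^{k-1})\gtrsim 1+k$, is written correctly.)
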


The following lemma  has been proved by Noi and Izuki in \cite{in-1}.
\begin{lem}\label{tt-L5}
If $q(\cdot)\in C^{\log}(\rn)\cap \mathcal{P}(\rn)$ and $w \in A_{q(\cdot)}$,
then there exist constants $\delta_{1}$, $\delta_{2}\in (0,1)$ and $C>0$ such that for all balls $B$ in $\rn$ and all measurable subsets $S\subset B,$
\begin{equation*}
\frac{\|\chi _S\|_{L^{q(\cdot)}(w)}}{\| \chi _B \|_{L^{q(\cdot)}(w)}} \leq C\bigg( \frac{| S |}{| B |} \bigg)^{\delta _1}\ \text{and} \ \frac{\|\chi _S\|_{L^{q^{\prime}( \cdot )}(w^{-1})}}{\|\chi _B \|_{L^{q^{\prime}( \cdot )}(w^{-1})}} \leq C\bigg( \frac{| S |}{| B |} \bigg)^{\delta _2}.
\end{equation*}
\end{lem}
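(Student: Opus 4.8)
Proof proposal. The plan is to reconstruct the argument of Izuki and Noi \cite{in-1}. Since the two displayed inequalities are interchanged by the duality $(w,q(\cdot))\leftrightarrow(w^{-1},q'(\cdot))$, I would first concentrate on the first one and then obtain the second for free.

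I would begin by pinning down the size of $\|\chi_B\|_{L^{q(\cdot)}(w)}$. Applying the generalized Hölder inequality for variable exponent Lebesgue spaces to the factorization $1=w\cdot w^{-1}$ on a ball $B$ gives $|B|\lesssim \|\chi_B\|_{L^{q(\cdot)}(w)}\,\|\chi_B\|_{L^{q'(\cdot)}(w^{-1})}$, while Definition~\ref{dg1} gives the reverse bound $\|\chi_B\|_{L^{q(\cdot)}(w)}\,\|\chi_B\|_{L^{q'(\cdot)}(w^{-1})}\le\|w\|_{A_{q(\cdot)}}\,|B|$; together these yield $|B|\sim\|\chi_B\|_{L^{q(\cdot)}(w)}\,\|\chi_B\|_{L^{q'(\cdot)}(w^{-1})}$ with constants independent of $B$. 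The same Hölder step, applied on $S$, gives $|S|\lesssim \|\chi_S\|_{L^{q(\cdot)}(w)}\,\|\chi_S\|_{L^{q'(\cdot)}(w^{-1})}$ for an arbitrary measurable $S$, and hence
\[
\frac{|S|}{|B|}\ \lesssim\ \frac{\|\chi_S\|_{L^{q(\cdot)}(w)}}{\|\chi_B\|_{L^{q(\cdot)}(w)}}\cdot\frac{\|\chi_S\|_{L^{q'(\cdot)}(w^{-1})}}{\|\chi_B\|_{L^{q'(\cdot)}(w^{-1})}}.
\]

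The substantive point is then to upgrade the \emph{linear} factor $|S|$ produced above into a genuine power $(|S|/|B|)^{\delta_1}$ with $\delta_1\in(0,1)$. For this I would use a quantitative self-improvement of the class $A_{q(\cdot)}$, available because $q(\cdot)\in C^{\log}(\rn)\cap\mathcal{P}(\rn)$: the variable exponent counterpart of the fact that a classical $A_p$ weight satisfies a reverse Hölder inequality and hence the $A_\infty$ estimate $\mu(S)\lesssim(|S|/|B|)^{\varepsilon}\mu(B)$ for $S\subset B$. Granting such a self-improvement for $w$ (and, by the symmetry noted above, for $w^{-1}$), one passes from the norm to the modular, controls $\|\chi_S\|_{L^{q(\cdot)}(w)}$ by a power of $\int_S w(x)^{q(x)}\,dx$, applies Hölder's inequality with the self-improvement exponent to split off a factor $(|S|/|B|)^{\varepsilon}$, and then re-uses the comparison $|B|\sim\|\chi_B\|_{L^{q(\cdot)}(w)}\|\chi_B\|_{L^{q'(\cdot)}(w^{-1})}$ to rewrite the remaining factor as a constant times $\|\chi_B\|_{L^{q(\cdot)}(w)}$; the bookkeeping then produces the first inequality. (An essentially equivalent route would be to prove the unweighted version for $L^{q(\cdot)}(\rn)$ first, via boundedness of $\mathcal{M}$ on $L^{q(\cdot)}$ and $L^{q'(\cdot)}$, and transfer it through the $A_{q(\cdot)}$ condition.)

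Finally, the second inequality costs nothing extra: the defining quantity in Definition~\ref{dg1} is invariant under $(w,q(\cdot))\mapsto(w^{-1},q'(\cdot))$, so $w^{-1}\in A_{q'(\cdot)}$, and $q(\cdot)\in C^{\log}(\rn)\cap\mathcal{P}(\rn)$ forces $q'(\cdot)\in C^{\log}(\rn)\cap\mathcal{P}(\rn)$; applying what precedes to $(w^{-1},q'(\cdot))$ in place of $(w,q(\cdot))$ gives the second inequality with some $\delta_2\in(0,1)$. I expect the main obstacle to be exactly the self-improvement step: the $A_{q(\cdot)}$ hypothesis together with the generalized Hölder inequality by themselves only yield the trivial exponent $\delta=0$ (monotonicity of $S\mapsto\|\chi_S\|_{L^{q(\cdot)}(w)}$), and extracting a power strictly less than $1$ genuinely requires the reverse-Hölder/$A_\infty$-type structure of $A_{q(\cdot)}$ weights that underlies \cite{in-1}.
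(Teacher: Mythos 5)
The paper does not actually prove this lemma --- it is imported verbatim from Izuki and Noi \cite{in-1} (``The following lemma has been proved by Noi and Izuki in \cite{in-1}''), so there is no in-paper argument to match yours against; the relevant comparison is with the proof in \cite{in-1}, and your reconstruction follows the same architecture. Your preliminary reductions are all correct: generalized H\"older applied to $1=w\cdot w^{-1}$ gives $|B|\lesssim\|\chi_B\|_{L^{q(\cdot)}(w)}\|\chi_B\|_{L^{q'(\cdot)}(w^{-1})}$, Definition \ref{dg1} gives the reverse inequality for balls, the duality $(w,q(\cdot))\mapsto(w^{-1},q'(\cdot))$ does reduce the second estimate to the first (since $\|w\|_{A_{q(\cdot)}}=\|w^{-1}\|_{A_{q'(\cdot)}}$ and log-H\"older continuity passes to $q'(\cdot)$), and you are right that these soft facts alone only yield the exponent $0$ or a lower bound on the ratio, never the stated upper bound with $\delta_1\in(0,1)$.

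The one substantive issue is that your proof is conditional exactly at the point you yourself flag: the self-improvement step is ``granted,'' not proved, so as written the argument does not close. To make it complete you must import (or prove) the fact that for $q(\cdot)\in C^{\log}(\rn)\cap\mathcal{P}(\rn)$ and $w\in A_{q(\cdot)}$ the measure $W(E):=\int_E w(x)^{q(x)}\,{\rm d}x$ is a classical $A_\infty$ measure, hence satisfies $W(S)/W(B)\leq C(|S|/|B|)^{\varepsilon}$ for $S\subset B$; this is supplied by the Cruz-Uribe--Fiorenza--Neugebauer theory \cite{cruz-1} (equivalently, by the left-openness/reverse H\"older property of the class $A_{q(\cdot)}$), and it is precisely the ingredient \cite{in-1} relies on. Once that is in hand, the norm-to-modular bookkeeping you describe does work, but note it requires the case distinction $\|\chi_Sw\|_{L^{q(\cdot)}}\lessgtr 1$ (and likewise for $B$), which produces $\delta_1$ of the form $\varepsilon/q^{+}$ rather than $\varepsilon$ itself; this is routine but should be written out. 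So: right approach, same route as the cited source, honest identification of the crux, but the crux itself is assumed rather than established.
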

\begin{rem}
 If $w^{q(\cdot)} \in \tilde{A}_{q(\cdot)}$, then $w^{-q^{\prime}(\cdot)} \in \tilde{A}_{p^{\prime}(\cdot)}$  by Definitions \ref{dg2}. 
 Then if  $w^{q(\cdot)} \in \tilde{A}_{q(\cdot)}$, we have $w \in A_{q(\cdot)}$ and $w^{-1} \in A_{q^{\prime}(\cdot)}$ by Definitions \ref{dg1}. 
 \end{rem}
\section{Boundedness of the vector-valued sublinear operator on weighted grand Herz-Morrey spaces }\label{wbl-s-2}
In this section, we give the boundedness of vector-valued sublinear operators on weighted grand Herz-Morrey spaces with variable exponents.

\begin{thm} \label{vT1}
Let $r, p \in (1,\infty)$,
$q(\cdot)\in C^{\log}(\rn)\cap \mathcal{P}(\rn)$, $\theta>0$,
 $\alpha(\cdot) \in L^{\infty}(\mathbb{R}^{n})\cap C^{\log}_0(\rn)\cap C^{\log}_{\infty}(\rn)$, $w \in A_{q(\cdot)}$,
 $- n\delta_1 <\alpha(0),\ \alpha_\infty <n\delta_2$,  where $\delta_1,\ \delta_2 \in (0,1)$ are the constants in Lemma \ref{tt-L5} for $q(\cdot)$.
Suppose that $T$ is a sublinear operator satisfies the size condition,
\begin{align}\label{vf-t2}
|Tf(x)| \leq C \int_{\rn} |x-y|^{-n} |f(y)|{\rm d}y
\end{align}
for all $f \in L^1_{\rm loc}(\rn)$ and a.e. $x \notin \supp f$. If the sublinear operator $T$ satisfies vector-valued inequality on $L^{q(\cdot)}(w)$,
\begin{equation}\label{vf-t1}
\bigg\| \bigg(\sum^\infty_{j=1}|Tf_j|^r \bigg)^{\frac{1}{r}}\bigg\|_{L^{q(\cdot)}(w)} \leq C \bigg\| \bigg(\sum^\infty_{j=1}|f_j|^{r} \bigg)^{\frac{1}{r}}\bigg\|_{L^{q(\cdot)}(w)}
\end{equation}
for all sequences$\{f_j\}^\infty_{ j=1}$ of locally integrable functions on $\rn$, then
\begin{equation}\label{vf-t3}
\bigg\| \bigg( \sum^\infty_{j=1} |Tf_j|^r \bigg)^{\frac{1}{r}}\bigg\|_{M\dot{K}_{q(\cdot),\lambda}^{\alpha (\cdot),p),\theta}(w)} \leq C  \bigg\| \bigg( \sum^\infty_{j=1} |f_j|^r \bigg)^{\frac{1}{r}}\bigg\|_{M\dot{K}_{q(\cdot),\lambda}^{\alpha (\cdot),p),\theta}(w)}.
\end{equation}
where the constant $C>0$ is independent of $\{f_j\}^\infty_{ j=1}$.
\end{thm}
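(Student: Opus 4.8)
The plan is to run the classical annulus decomposition for Herz-type spaces. First, by Lemma~\ref{ghm-L1}, I replace the variable power $2^{k\alpha(\cdot)}$ inside the quasi-norm by the two constants $\alpha(0)$ and $\alpha_\infty$; then I estimate the resulting pieces with the size condition \eqref{vf-t2}, the vector-valued bound \eqref{vf-t1}, the $A_{q(\cdot)}$ condition, and the two ratio inequalities of Lemma~\ref{tt-L5}.

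Write $F:=\big(\sum_{j\ge1}|f_j|^r\big)^{1/r}$ and $G:=\big(\sum_{j\ge1}|Tf_j|^r\big)^{1/r}$, so that \eqref{vf-t3} reads $\|G\|_{M\dot{K}_{q(\cdot),\lambda}^{\alpha(\cdot),p),\theta}(w)}\lesssim\|F\|_{M\dot{K}_{q(\cdot),\lambda}^{\alpha(\cdot),p),\theta}(w)}$. Decompose each $f_j=\sum_{l\in\zz}f_j\chi_l$; the sublinearity of $T$ and Minkowski's inequality in $\ell^r$ give, for every $k\in\zz$, the pointwise bound $G\,\chi_k\le\sum_{l\in\zz}G_l\,\chi_k$ with $G_l:=\big(\sum_{j\ge1}|T(f_j\chi_l)|^r\big)^{1/r}$. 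Split the $l$-sum into $l\le k-2$, $|l-k|\le1$, and $l\ge k+2$. On the central block $|l-k|\le1$, apply \eqref{vf-t1} to $\{f_j\chi_l\}_j$ to obtain $\|G_l\chi_k\|_{L^{q(\cdot)}(w)}\le\|G_l\|_{L^{q(\cdot)}(w)}\lesssim\|F\chi_l\|_{L^{q(\cdot)}(w)}$. On the two off-central blocks ($|l-k|\ge2$), $x\in D_k$ and $y\in D_l$ force $|x-y|\sim 2^{\max\{k,l\}}$, so \eqref{vf-t2} together with Minkowski's inequality in $\ell^r$ gives $G_l(x)\chi_k(x)\lesssim 2^{-n\max\{k,l\}}\big(\int_{D_l}F(y)\,{\rm d}y\big)\chi_k(x)$; estimating $\int_{D_l}F$ by the generalized Hölder inequality in $L^{q(\cdot)}(w)$ and then invoking the $A_{q(\cdot)}$ condition and Lemma~\ref{tt-L5} (its first ratio for the quotients of $\|\chi_{B}\|_{L^{q(\cdot)}(w)}$, its second for those of $\|w^{-1}\chi_{B}\|_{L^{q'(\cdot)}(\rn)}$), one reaches
\[
\|G_l\chi_k\|_{L^{q(\cdot)}(w)}\lesssim 2^{(l-k)n\delta_2}\|F\chi_l\|_{L^{q(\cdot)}(w)}\quad(l\le k-2),\qquad
\|G_l\chi_k\|_{L^{q(\cdot)}(w)}\lesssim 2^{(k-l)n\delta_1}\|F\chi_l\|_{L^{q(\cdot)}(w)}\quad(l\ge k+2).
\]
Summing over $l$ and setting $a_l:=\|F\chi_l\|_{L^{q(\cdot)}(w)}$ produces the decisive estimate
\[
\|G\chi_k\|_{L^{q(\cdot)}(w)}\lesssim\sum_{l\le k-2}2^{(l-k)n\delta_2}a_l+\sum_{|l-k|\le1}a_l+\sum_{l\ge k+2}2^{(k-l)n\delta_1}a_l .
\]

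To finish, fix $\delta>0$, set $s:=p(1+\delta)$, and, by Lemma~\ref{ghm-L1}, reduce the claim to bounding, uniformly in $\delta$ and $k_0$, the quantity $2^{-k_0\lambda}\big(\delta^\theta\sum_{k\le k_0}2^{k\alpha(0)s}\|G\chi_k\|_{L^{q(\cdot)}(w)}^{s}\big)^{1/s}$ for $k_0\le0$, and the analogous quantity for $k_0>0$ carrying the exponent $\alpha(0)$ on $\{k<0\}$ and $\alpha_\infty$ on $\{0\le k\le k_0\}$. I insert the decisive estimate and use the triangle inequality for the weighted $\ell^{s}$-norm — rather than a pointwise inequality of the form $(x+y+z)^s\lesssim x^s+y^s+z^s$, whose constant would depend on $s$ — to separate the three blocks. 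After multiplication by $2^{-k_0\lambda}$ the $a_k$-block is one of the terms in Lemma~\ref{ghm-L1} written for $F$, hence dominated by $\|F\|_{M\dot{K}_{q(\cdot),\lambda}^{\alpha(\cdot),p),\theta}(w)}$. For the interior block, pulling $2^{k\alpha(0)}$ inside turns it into the discrete convolution of $(2^{l\alpha(0)}a_l)_l$ against a kernel supported on $\{m\le-2\}$ with values $2^{m(n\delta_2-\alpha(0))}$, summable because $\alpha(0)<n\delta_2$ and with $\ell^1$-mass independent of $\delta$; Young's inequality for sequences bounds it by $2^{-k_0\lambda}\big(\delta^\theta\sum_{l\le k_0}2^{l\alpha(0)s}a_l^s\big)^{1/s}\lesssim\|F\|_{M\dot{K}_{q(\cdot),\lambda}^{\alpha(\cdot),p),\theta}(w)}$, all the relevant indices $l\le k-2$ lying below $k_0$. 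The exterior block splits into the part $k+2\le l\le k_0$, handled the same way against the kernel $2^{m(n\delta_1+\alpha(0))}$ on $\{m\le-2\}$ (summable since $\alpha(0)>-n\delta_1$), and the genuinely different tail $l>k_0$; for the latter one uses the very definition of the Morrey-Herz quasi-norm — which gives $a_l\lesssim\delta^{-\theta/s}2^{l(\lambda-\alpha(0))}\|F\|_{M\dot{K}_{q(\cdot),\lambda}^{\alpha(\cdot),p),\theta}(w)}$ for $l>k_0$, with $\alpha_\infty$ in place of $\alpha(0)$ once $l>0$ — and then sums the geometric series in $l$ and in $k\le k_0$; these converge by the constraints on the parameters, and the emerging factor $2^{k_0\lambda}$ cancels the prefactor $2^{-k_0\lambda}$ while the powers $\delta^{\pm\theta/s}$ cancel against the $\delta^\theta$ weight.

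The one delicate point — where I expect the actual work to concentrate — is this exterior tail $l>k_0$: since $F$ only belongs to a Morrey-type space, the norms $\|F\chi_l\|_{L^{q(\cdot)}(w)}$ need not be small for large $l$, so one must extract just enough decay from the kernel $2^{(k-l)n\delta_1}$ and from the weight $2^{-k_0\lambda}$ to make the double sum converge and, simultaneously, keep every constant independent of $\delta$, so that the outer $\sup_{\delta>0}$ is harmless and the $\delta^\theta$ factors drop out. The remaining pieces, and the reduction to constant exponents via Lemma~\ref{ghm-L1}, are routine once this is in place.
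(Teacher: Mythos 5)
Your proposal runs along essentially the same lines as the paper's proof: the reduction to the constant exponents $\alpha(0)$, $\alpha_\infty$ via Lemma \ref{ghm-L1}, the annulus decomposition into the blocks $l\le k-2$, $|l-k|\le 1$, $l\ge k+2$, the size condition combined with H\"older's inequality, the $A_{q(\cdot)}$ condition and Lemma \ref{tt-L5} to produce exactly the factors $2^{(l-k)n\delta_2}$ and $2^{(k-l)n\delta_1}$, the vector-valued hypothesis \eqref{vf-t1} on the diagonal block, and a convolution-type summation exploiting $-n\delta_1<\alpha(0),\alpha_\infty<n\delta_2$; whether the last step is closed by Young's inequality for sequences or by the paper's device of splitting $2^{(l-k)\varepsilon}$ into two halves and applying H\"older with exponents $p(1+\delta)$ and $(p(1+\delta))'$ is immaterial, and both keep the constants independent of $\delta$. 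The only genuine divergence is the exterior tail $l>k_0$ (the paper's $E_{3,2}$ and part of $G_3$): you extract $a_l\les\delta^{-\theta/(p(1+\delta))}2^{l(\lambda-\alpha_\infty)}\|F\|$ from the Morrey quasi-norm, so your geometric series $\sum_{l>k_0}2^{-l(n\delta_1+\alpha_\infty-\lambda)}$ converges only under the additional restriction $\lambda<n\delta_1+\alpha_\infty$ (and likewise $\lambda\le n\delta_1+\alpha(0)$), which is not among the stated hypotheses; the paper instead retains the full decay $2^{-l\varepsilon_3}$ inside a H\"older step and then bounds $\delta^\theta\sum_{l\ge 0}2^{l\alpha_\infty p(1+\delta)}a_l^{p(1+\delta)}$ directly by $\|F\|^{p(1+\delta)}$, a step that is immediate for $\lambda=0$ but is itself unjustified for $\lambda>0$. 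So your identification of this tail as the one delicate point is exactly right; to recover the theorem as stated you would either need the extra condition on $\lambda$ or the paper's bookkeeping, and you should be aware that neither version disposes of general $\lambda>0$ cleanly. Everything else in your outline matches the paper's argument step for step.
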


\begin{lem}[see {\cite[Corollary 3.2]{cw-1}}]\label{vL13}
Let $q(\cdot) \in \cp(\rn)$ and $w$ be a weight. If the maximal operator $\mathcal{M}$ is bounded on $L^{q(\cdot)}(w)$ and $L^{q'(\cdot)}(w^{-1})$ and $r\in (1,\infty)$, then there is a positive constant $C$ such that
\[ \bigg\| \bigg(\sum^\infty_{j=1} (\mathcal{M}f_j)^r\bigg)^{\frac{1}{r}}\bigg\|_{L^{q(\cdot)}(w)} \leq C \bigg\| \bigg(\sum^\infty_{j=1} |f_j|^r\bigg)^{\frac{1}{r}}\bigg\|_{L^{q(\cdot)}(w)}. \]
\end{lem}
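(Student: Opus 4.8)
Since $L^{q(\cdot)}(w)$ carries no single integrability exponent, the plan is to prove the inequality by Rubio de Francia extrapolation, transferring a classical fixed-exponent weighted inequality into the variable space rather than arguing at a fixed power directly. The fixed-exponent input is the Andersen--John/García-Cuerva weighted vector-valued Fefferman--Stein inequality: for one fixed $p_0\in(1,\infty)$, every $v\in A_{p_0}$, and $1<r<\infty$,
\[
\Big\| \big(\textstyle\sum_{j}(\mathcal{M}f_j)^r\big)^{1/r}\Big\|_{L^{p_0}(v)} \le C\big([v]_{A_{p_0}}\big)\Big\| \big(\textstyle\sum_{j}|f_j|^r\big)^{1/r}\Big\|_{L^{p_0}(v)} .
\]
The two hypotheses of the lemma --- that $\mathcal{M}$ is bounded on $L^{q(\cdot)}(w)$ and on $L^{q'(\cdot)}(w^{-1})$ --- are precisely the structural conditions under which the extrapolation machinery operates in this Banach function space, so the conclusion follows by feeding the pairs $(\Psi,\Phi)$, with $\Psi=(\sum_j(\mathcal{M}f_j)^r)^{1/r}$ and $\Phi=(\sum_j|f_j|^r)^{1/r}$, into the extrapolation theorem.

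Concretely, I would run the extrapolation engine as follows. First I would record that $L^{q(\cdot)}(w)$ is a Banach function space whose associate space is $L^{q'(\cdot)}(w^{-1})$ (up to equivalent norms), via the generalized Hölder inequality for $L^{q(\cdot)}\times L^{q'(\cdot)}$ and the norm--conjugate formula; this yields the duality representation $\|\Psi\|_{L^{q(\cdot)}(w)}\sim\sup\{\int_{\rn}\Psi h\,\mathrm{d}x:\ h\ge0,\ \|h\|\le1\}$ in the appropriate ($p_0$-rescaled) associate norm. Fixing such a test function $h$, I would apply the Rubio de Francia algorithm $\mathcal{R}h=\sum_{k\ge0}2^{-k}\|\mathcal{M}\|^{-k}\mathcal{M}^{k}h$, using the boundedness of $\mathcal{M}$ on $L^{q'(\cdot)}(w^{-1})$: this produces $H=\mathcal{R}h$ with $h\le H$, with associate norm comparable to that of $h$, and with $H\in A_1\subset A_{p_0}$ and $[H]_{A_1}$ controlled by the operator norm of $\mathcal{M}$. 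Plugging $H$ into the fixed-exponent inequality above, recombining by the generalized Hölder inequality, and absorbing the $[H]_{A_{p_0}}$ factor then yields $\|\Psi\|_{L^{q(\cdot)}(w)}\le C\|\Phi\|_{L^{q(\cdot)}(w)}$.

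The main obstacle is the passage to the auxiliary exponent $p_0$ and the verification that it is legitimate. One cannot dualize the vector maximal inequality naively at the endpoint power $1$: the inequality $\int_{\rn}\Psi\,u\,\mathrm{d}x\le C\int_{\rn}\Phi\,\mathcal{M}u\,\mathrm{d}x$ is \emph{false} for a general weight $u$ (already in the scalar case $\Psi=\mathcal{M}f$, taking $f=\chi_{B_0}$ and $u\equiv1$ makes the left side infinite while the right side equals $|B_0|$), so the power $p_0>1$ is genuinely needed. This forces the argument through the $p_0$-convexification of $L^{q(\cdot)}(w)$, and it is exactly here that both hypotheses are consumed: boundedness of $\mathcal{M}$ on $L^{q(\cdot)}(w)$ governs the convexity/rescaling step, while boundedness of $\mathcal{M}$ on $L^{q'(\cdot)}(w^{-1})$ drives the Rubio de Francia construction on the associate side. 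The remaining technical care is to confirm that the rescaled associate space still admits the iteration and that $H$ lands in $A_{p_0}$ with a constant independent of $h$; once these are in place, the constant $C$ in the conclusion is seen to be independent of the sequence $\{f_j\}$.
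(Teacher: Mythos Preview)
The paper does not supply its own proof of this lemma; it is quoted verbatim from Cruz--Uribe and Wang \cite[Corollary~3.2]{cw-1} and used as a black box. Your extrapolation-based outline is the same mechanism that underlies that reference: start from the classical weighted vector-valued Fefferman--Stein inequality (Andersen--John) at a fixed exponent $p_0$, then push it into $L^{q(\cdot)}(w)$ via Rubio de Francia iteration, using the two maximal-operator hypotheses to run the algorithm on the space and on its associate. So in spirit you are reproducing the cited argument rather than diverging from it.

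One point in your sketch deserves sharpening. You say that boundedness of $\mathcal{M}$ on $L^{q(\cdot)}(w)$ ``governs the convexity/rescaling step'' while boundedness on $L^{q'(\cdot)}(w^{-1})$ drives the iteration on the associate side. That is not quite how the two hypotheses are consumed in the Cruz--Uribe--Wang argument: both are used to build \emph{two} Rubio de Francia operators $\mathcal{R}_1$ (on $L^{q(\cdot)}(w)$) and $\mathcal{R}_2$ (on $L^{q'(\cdot)}(w^{-1})$), and the resulting $A_1$ majorants are combined to produce an $A_{p_0}$ weight of the form $(\mathcal{R}_1 g)^{1-p_0}\mathcal{R}_2 h$ against which the fixed-exponent inequality is applied. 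Your single-sided iteration would suffice only if you already knew $\mathcal{M}$ is bounded on the associate of the $p_0$-convexification $(L^{q(\cdot)/p_0}(w^{p_0}))'$, which is not the stated hypothesis and does not follow from boundedness on $L^{q'(\cdot)}(w^{-1})$ alone in the weighted variable setting. This is not a fatal gap---the two-sided construction repairs it with the same ingredients you have listed---but as written your allocation of the hypotheses is off.
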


From Theorem \ref{vT1} and Lemma \ref{vL13}, we obtain the following corollary.
\begin{cor}\label{vC1}
Let $r, p \in (1,\infty)$,
$q(\cdot)\in C^{\log}(\rn)\cap \mathcal{P}(\rn)$, $\theta>0$,
 $\alpha(\cdot) \in L^{\infty}(\mathbb{R}^{n})\cap C^{\log}_0(\rn)\cap C^{\log}_{\infty}(\rn)$, $w \in A_{q(\cdot)}$,
 $ - n\delta_1 <\alpha(0),\ \alpha_\infty <n\delta_2$, where $\delta_1,\ \delta_2 \in (0,1)$ are the constants in Lemma \ref{tt-L5} for $q(\cdot)$. Then
\[\bigg\| \bigg( \sum^\infty_{j=1} |\mathcal{M}f_j|^r \bigg)^{\frac{1}{r}}\bigg\|_{M\dot{K}_{q(\cdot),\lambda}^{\alpha (\cdot),p),\theta}(w)} \leq C  \bigg\| \bigg( \sum^\infty_{j=1} |f_j|^r \bigg)^{\frac{1}{r}}\bigg\|_{M\dot{K}_{q(\cdot),\lambda}^{\alpha (\cdot),p),\theta}(w)},\]
where the constant $C>0$ is independent of $\{f_j\}^\infty_{ j=1}$ of locally integrable functions on $\rn$.
\end{cor}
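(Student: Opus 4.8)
The plan is to deduce Corollary \ref{vC1} directly from Theorem \ref{vT1} by taking the sublinear operator $T$ there to be the Hardy--Littlewood maximal operator $\mathcal{M}$ and verifying the two standing hypotheses of that theorem, namely the size condition \eqref{vf-t2} and the vector-valued inequality \eqref{vf-t1} on $L^{q(\cdot)}(w)$; every remaining structural assumption ($r,p,q(\cdot),\theta,\alpha(\cdot),w$ and the range $-n\delta_1<\alpha(0),\,\alpha_\infty<n\delta_2$) is identical to that of Theorem \ref{vT1}, so it requires no re-checking.

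First I would note that $\mathcal{M}$ is sublinear, since $\mathcal{M}(f+g)\le\mathcal{M}f+\mathcal{M}g$ and $\mathcal{M}(cf)=|c|\,\mathcal{M}f$. For the size condition, fix $f\in L^1_{\rm loc}(\rn)$ and $x\notin\supp f$. For any ball $B\ni x$ with radius $r_B$ and any $y\in B$ one has $|x-y|\le 2r_B$, hence $|B|=\omega_n r_B^n\ge C^{-1}|x-y|^n$, so that
\[
\frac{1}{|B|}\int_B|f(y)|\,{\rm d}y\le C\int_B|x-y|^{-n}|f(y)|\,{\rm d}y\le C\int_{\rn}|x-y|^{-n}|f(y)|\,{\rm d}y .
\]
Taking the supremum over all balls containing $x$ gives \eqref{vf-t2} for $T=\mathcal{M}$ (the restriction $x\notin\supp f$ only ensures finiteness of the right-hand side).

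The single point that actually uses the hypotheses of Corollary \ref{vC1} is the vector-valued inequality \eqref{vf-t1}. By Lemma \ref{vL13} it suffices to know that $\mathcal{M}$ is bounded on $L^{q(\cdot)}(w)$ and on $L^{q'(\cdot)}(w^{-1})$. Since $q(\cdot)\in C^{\log}(\rn)\cap\cp(\rn)$ and $w\in A_{q(\cdot)}$, the first boundedness is the Cruz-Uribe--Fiorenza--Neugebauer characterization of $A_{q(\cdot)}$ weights \cite{cruz-1}. For the second, I would observe that Definition \ref{dg1} is symmetric under $(q(\cdot),w)\mapsto(q'(\cdot),w^{-1})$ because $(q')'=q$, whence $\|w^{-1}\|_{A_{q'(\cdot)}}=\|w\|_{A_{q(\cdot)}}<\infty$, i.e.\ $w^{-1}\in A_{q'(\cdot)}$, and $\mathcal{M}$ is again bounded on $L^{q'(\cdot)}(w^{-1})$. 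Thus Lemma \ref{vL13} applies and \eqref{vf-t1} holds with $T=\mathcal{M}$.

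With both hypotheses verified, Theorem \ref{vT1} yields \eqref{vf-t3} with $T$ replaced by $\mathcal{M}$, which is exactly the asserted estimate, the constant depending only on the listed parameters and not on $\{f_j\}_{j=1}^\infty$. I do not expect any genuine obstacle here, as the corollary is essentially a repackaging of Theorem \ref{vT1}; the only step needing a little care is the passage from $w\in A_{q(\cdot)}$ to the $L^{q'(\cdot)}(w^{-1})$-boundedness of $\mathcal{M}$ through the duality of the $A_{q(\cdot)}$ condition, which is also what underlies Lemma \ref{tt-L5}.
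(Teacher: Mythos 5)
Your proposal is correct and follows exactly the route the paper intends: the corollary is stated as an immediate consequence of Theorem \ref{vT1} and Lemma \ref{vL13}, i.e.\ one takes $T=\mathcal{M}$, checks the size condition \eqref{vf-t2} pointwise, and obtains \eqref{vf-t1} from Lemma \ref{vL13} using the boundedness of $\mathcal{M}$ on $L^{q(\cdot)}(w)$ and, by the symmetry of Definition \ref{dg1} under $(q(\cdot),w)\mapsto(q'(\cdot),w^{-1})$, on $L^{q'(\cdot)}(w^{-1})$. Your write-up supplies the details (the $|B|\gtrsim|x-y|^n$ estimate and the duality of the $A_{q(\cdot)}$ condition) that the paper leaves implicit, but there is no difference in approach.
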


\begin{proof}[\bf Proof of Theorem \ref{vT1}]
 Since bounded functions with compact support are dense in $M\dot{K}_{q_2(\cdot),\lambda}^{\alpha (\cdot),p),\theta}(w)$, we only consider $f$ is a bounded function with compact support and write
\[f_j(x)=\sum^\infty_{l=-\infty}f^l_j \chi_{l}=:\sum^\infty_{l=-\infty}f^l_j, \quad j\in \nn.\]
By Lemma \ref{ghm-L1}, we have
\begin{align*}
& \bigg\| \bigg( \sum^\infty_{j=1} |Tf_j|^r \bigg)^{\frac{1}{r}}\bigg\|_{M\dot{K}_{q(\cdot),\lambda}^{\alpha (\cdot),p),\theta}(w)} \\
 & \quad \sim  \max \Bigg\{ \sup_{\delta>0} \sup_{k_0\leq 0,k_0\in \zz}2^{-k_0\lambda} \bigg( \delta^\theta \sum_{k=-\infty}^{k_0} 2^{k\alpha(0)p(1+\delta)} \bigg\| \bigg( \sum_{j=1}^\infty \bigg| \sum_{l=-\infty}^\infty Tf^l_j \bigg|^r \bigg)^{\frac{1}{r}} \chi_k \bigg\|_{L^{q(\cdot)}(w)}^{p(1+\delta)} \bigg)^{\frac{1}{p(1+\delta)}}, \\
& \quad   \sup_{\delta>0} \sup_{k_0 > 0,k_0\in \zz}2^{-k_0\lambda} \bigg( \delta^\theta \sum_{k=-\infty}^{-1} 2^{k\alpha(0)p(1+\delta)} \bigg\| \bigg( \sum_{j=1}^\infty \bigg| \sum_{l=-\infty}^\infty Tf^l_j \bigg|^r \bigg)^{\frac{1}{r}} \chi_k \bigg\|_{L^{q(\cdot)}(w)}^{p(1+\delta)} \\
& \quad \quad + \delta^\theta \sum_{k=0}^{k_0} 2^{k\alpha_\infty p(1+\delta)} \bigg\| \bigg( \sum_{j=1}^\infty \bigg| \sum_{l=-\infty}^\infty Tf^l_j \bigg|^r \bigg)^{\frac{1}{r}} \chi_k \bigg\|_{L^{q(\cdot)}(w)}^{p(1+\delta)} \bigg)^{\frac{1}{p(1+\delta)}} \Bigg\} \\
 & \quad=:\max \{E,F+G\}.
\end{align*}

Since the estimation of $F$ is essentially similar to that of $E$, so we suffice to obtain $E$ and $G$ are bounded on weighted grand Herz-Morrey spaces  with variable exponents. It is easy to see that
\[E \les \sum^3_{i=i} E_i \text{ and } G \les \sum^3_{i=i}G_i,\]
where
\[E_1 := \sup_{\delta>0} \sup_{k_0\leq 0,k_0\in \zz}2^{-k_0\lambda} \bigg( \delta^\theta \sum_{k=-\infty}^{k_0} 2^{k\alpha(0)p(1+\delta)} \bigg\| \bigg( \sum_{j=1}^\infty \bigg| \sum_{l=-\infty}^{k-2} Tf^l_j \bigg|^r \bigg)^{\frac{1}{r}} \chi_k \bigg\|_{L^{q(\cdot)}(w)}^{p(1+\delta)} \bigg)^{\frac{1}{p(1+\delta)}},\]
\[E_2 := \sup_{\delta>0} \sup_{k_0\leq 0,k_0\in \zz}2^{-k_0\lambda} \bigg( \delta^\theta \sum_{k=-\infty}^{k_0} 2^{k\alpha(0)p(1+\delta)} \bigg\| \bigg( \sum_{j=1}^\infty \bigg| \sum_{l=k-1}^{k+1} Tf^l_j \bigg|^r \bigg)^{\frac{1}{r}} \chi_k \bigg\|_{L^{q(\cdot)}(w)}^{p(1+\delta)} \bigg)^{\frac{1}{p(1+\delta)}},\]
\[E_3 := \sup_{\delta>0} \sup_{k_0\leq 0,k_0\in \zz}2^{-k_0\lambda} \bigg( \delta^\theta \sum_{k=-\infty}^{k_0} 2^{k\alpha(0)p(1+\delta)} \bigg\| \bigg( \sum_{j=1}^\infty \bigg| \sum_{l=k+2}^\infty Tf^l_j \bigg|^r \bigg)^{\frac{1}{r}} \chi_k \bigg\|_{L^{q(\cdot)}(w)}^{p(1+\delta)} \bigg)^{\frac{1}{p(1+\delta)}},\]
\[G_1:= \sup_{\delta>0} \sup_{k_0 > 0,k_0\in \zz}2^{-k_0\lambda} \bigg( \delta^\theta \sum_{k=0}^{k_0} 2^{k\alpha_\infty p(1+\delta)} \bigg\| \bigg( \sum_{j=1}^\infty \bigg| \sum_{l=-\infty}^{k-2} Tf^l_j \bigg|^r \bigg)^{\frac{1}{r}} \chi_k \bigg\|_{L^{q(\cdot)}(w)}^{p(1+\delta)} \bigg)^{\frac{1}{p(1+\delta)}}.\]
\[G_2:= \sup_{\delta>0} \sup_{k_0 > 0,k_0\in \zz}2^{-k_0\lambda} \bigg( \delta^\theta \sum_{k=0}^{k_0} 2^{k\alpha_\infty p(1+\delta)} \bigg\| \bigg( \sum_{j=1}^\infty \bigg| \sum_{l=k-1}^{k+1} Tf^l_j \bigg|^r \bigg)^{\frac{1}{r}} \chi_k \bigg\|_{L^{q(\cdot)}(w)}^{p(1+\delta)} \bigg)^{\frac{1}{p(1+\delta)}}.\]
\[G_3:= \sup_{\delta>0} \sup_{k_0 > 0,k_0\in \zz}2^{-k_0\lambda} \bigg( \delta^\theta \sum_{k=0}^{k_0} 2^{k\alpha_\infty p(1+\delta)} \bigg\| \bigg( \sum_{j=1}^\infty \bigg| \sum_{l=k+2}^\infty Tf^l_j \bigg|^r \bigg)^{\frac{1}{r}} \chi_k \bigg\|_{L^{q(\cdot)}(w)}^{p(1+\delta)} \bigg)^{\frac{1}{p(1+\delta)}}.\]
If $l \leq k-2,$ we deduce that
$|x-y| \geq |x|-|y|>2^{k-1}-2^l \geq 2^{k-2} ,\, x\in D_k,\, y \in D_{l}.$
Then by (\ref{vf-t2}) for $\forall x \in D_k$, we have
\[|Tf^l_j(x)| \lesssim 2^{-kn} \int_{\rn} |f^l_j(y)| {\rm d}y.\]
Thus, by Minkowski's inequality, H\"{o}lder's inequality, Definition \ref{dg1} and Lemma \ref{tt-L5}, we obtain
\begin{align}\label{vsgh-1}
&\bigg\|\bigg(\sum^\infty_{j=1}\bigg|\sum^{k-2}_{l=-\infty}  Tf^l_j\bigg|^r\bigg)^{\frac{1}{r}} \chi_k \bigg\|_{L^{q(\cdot)}(w)} \no\\
& \quad \lesssim \bigg\| \bigg(\sum^\infty_{j=1} \bigg(\sum^{k-2}_{l=-\infty} 2^{-kn}\int_{\rn} |f^l_j(y)|{\rm d}y\bigg)^{r}\bigg)^{\frac{1}{r}}  \chi_{k}\bigg\|_{L^{q(\cdot)}(w)} \nonumber\\
& \quad  \lesssim \bigg\| \sum^{k-2}_{l=-\infty} 2^{-kn}\int_{\rn} \bigg(\sum^\infty_{j=1}|f^l_j(y)|^{r}\bigg)^{\frac{1}{r}}{\rm d}y \chi_{k}\bigg\|_{L^{q(\cdot)}(w)} \no\\
& \quad \les \sum^{k-2}_{l=-\infty} 2^{-kn} \|\chi_{B_k}\|_{L^{q(\cdot)}(w)} \bigg\|\bigg(\sum^\infty_{j=1} |f^l_j|^{r}\bigg)^{\frac{1}{r}} w \chi_{l}\bigg\|_{L^{q(\cdot)}(\rn)} \|\chi_{l}w^{-1}\|_{L^{q'(\cdot)}(\rn)} \nonumber\\
& \quad \les \sum^{k-2}_{l=-\infty} 2^{-kn} |B_k| \|\chi_{B_k}\|^{-1}_{L^{q'(\cdot)}(w^{-1})}
\|\chi_{B_{l}}\|_{L^{q'(\cdot)}(w^{-1})} \bigg\|\bigg(\sum^\infty_{j=1} |f_j|^{r}\bigg)^{\frac{1}{r}} \chi_{l}\bigg\|_{L^{q(\cdot)}(w)} \nonumber\\
& \quad \les \sum^{k-2}_{l=-\infty} 2^{(l-k)n\delta_{2}} \bigg\|\bigg(\sum^\infty_{j=1} |f_j|^{r}\bigg)^{\frac{1}{r}} \chi_{l}\bigg\|_{L^{q(\cdot)}(w)}.
\end{align}
If $l  \geq k+2$, we deduce that
$|x-y| \geq |y|-|x| > 2^{l-2},\,x\in D_k,\, y \in D_l.$
 For $\forall x \in D_k$, since the sublinear operator $T$ satisfies (\ref{vf-t2}),
then  we have
\[ |Tf^l_j(x)| \lesssim 2^{-ln} \int_{\rn} |f^l_j(y)| {\rm d}y.\]
Thus, by Minkowski's inequality, H\"{o}lder's inequality, Definition \ref{dg1} and Lemma \ref{tt-L5}, we obtain

\begin{align}\label{vsgh-2}
&\bigg\|\bigg(\sum^\infty_{j=1}\bigg| \sum^{\infty}_{l=k+2} Tf^l_j\bigg|^r\bigg)^{\frac{1}{r}} \chi_k \bigg\|_{L^{q(\cdot)}(w)} \no \\
& \quad \lesssim \bigg\|\bigg(\sum^\infty_{j=1}\bigg(\sum^{\infty}_{l=k+2} 2^{-ln} \int_{\rn} |f^l_j(y)|{\rm d}y\bigg)^{r}\bigg)^{\frac{1}{r}}  \chi_{k}\bigg\|_{L^{q(\cdot)}(w)} \nonumber\\
& \quad \lesssim \bigg\|\sum^{\infty}_{l=k+2} 2^{-ln} \int_{\rn} \bigg(\sum^\infty_{j=1}|f^l_j(y)|^{r}\bigg)^{\frac{1}{r}}{\rm d}y \chi_{k}\bigg\|_{L^{q(\cdot)}(w)} \no\\
& \quad\les \sum^{\infty}_{l=k+2} 2^{-ln} \|\chi_{B_k}\|_{L^{q(\cdot)}(w)} \bigg\|\bigg(\sum^\infty_{j=1} |f_j|^{r}\bigg)^{\frac{1}{r}} w \chi_{l}\bigg\|_{L^{q(\cdot)}(\rn)} \|\chi_{l}w^{-1}\|_{L^{q'(\cdot)}(\rn)} \nonumber\\
& \quad \les \sum^{\infty}_{l=k+2} 2^{-ln} |B_l| \|\chi_{B_k}\|_{L^{q(\cdot)}(w)}  \|\chi_{B_{l}}\|^{-1}_{L^{q(\cdot)}(w)} \bigg\|\bigg(\sum^\infty_{j=1} |f_j|^{r}\bigg)^{\frac{1}{r}} \chi_{l}\bigg\|_{L^{q(\cdot)}(w)} \nonumber\\
& \quad \les \sum^{\infty}_{l=k+2} 2^{(k-l)n\delta_1} \bigg\|\bigg(\sum^\infty_{j=1} |f_j|^{r}\bigg)^{\frac{1}{r}} \chi_{l}\bigg\|_{L^{q(\cdot)}(w)}.
\end{align}

Estimate $E_1$. By $\varepsilon_1= n\delta_2-\alpha(0)>0$ and (\ref{vsgh-1}) and H\"{o}lder's inequality, we obtain
\begin{align*}
&\bigg( \delta^\theta \sum_{k=-\infty}^{k_0} 2^{k\alpha(0)p(1+\delta)}\bigg( \sum^{k-2}_{l=-\infty} 2^{(l-k)n\delta_2} \bigg\|\bigg(\sum^\infty_{j=1} |f_j|^{r}\bigg)^{\frac{1}{r}} \chi_{l}\bigg\|_{L^{q(\cdot)}(w)} \bigg)^{p(1+\delta)} \bigg)^{\frac{1}{p(1+\delta)}} \\
& \les \bigg( \delta^\theta \sum_{k=-\infty}^{k_0}\bigg( \sum^{k-2}_{l=-\infty} 2^{\alpha(0)l} 2^{(l-k)\varepsilon_1} \bigg\|\bigg(\sum^\infty_{j=1} |f_j|^{r}\bigg)^{\frac{1}{r}} \chi_{l}\bigg\|_{L^{q(\cdot)}(w)} \bigg)^{p(1+\delta)} \bigg)^{\frac{1}{p(1+\delta)}} \\
& \les \bigg( \delta^\theta \sum_{k=-\infty}^{k_0}  \bigg( \sum^{k-2}_{l=-\infty} 2^{\alpha(0)lp(1+\delta)} 2^{(l-k)\varepsilon_1 p(1+\delta)/2}\bigg\|\bigg(\sum^\infty_{j=1} |f_j|^{r}\bigg)^{\frac{1}{r}} \chi_{l}\bigg\|_{L^{q(\cdot)}(w)}^{p(1+\delta)} \bigg)\\
&\quad\times  \bigg( \sum^{k-2}_{l=-\infty} 2^{(l-k)\varepsilon_1 (p(1+\delta))^{\prime}/2} \bigg)^{\frac{p(1+\delta)}{(p(1+\delta))^{\prime}}} \bigg)^{\frac{1}{p(1+\delta)}} \\
& \les \bigg( \delta^\theta \sum_{l=-\infty}^{k_0} 2^{\alpha(0)lp(1+\delta)}  \bigg\|\bigg(\sum^\infty_{j=1} |f_j|^{r}\bigg)^{\frac{1}{r}} \chi_{l}\bigg\|_{L^{q(\cdot)}(w)}^{p(1+\delta)} \bigg)^{\frac{1}{p(1+\delta)}}.
\end{align*}
Hence
\[E_1  \les   \bigg\| \bigg( \sum^\infty_{j=1} |f_j|^r \bigg)^{\frac{1}{r}}\bigg\|_{M\dot{K}_{q(\cdot),\lambda}^{\alpha (\cdot),p),\theta}(w)}.\]

Estimate $E_2$. For $k-1 \leq l \leq k+1$, $\forall x \in D_k$, since $T$ satisfies (\ref{vf-t1}),  then by  Minkowski's inequality, we obtain
\begin{align}\label{vsgh-3}
\bigg\|\bigg(\sum^\infty_{j=1}\bigg|\sum^{k+1}_{l=k-1} Tf^l_j\bigg|^r\bigg)^{\frac{1}{r}} \chi_k \bigg\|_{L^{q(\cdot)}(w)}
&\lesssim \bigg\|\sum^{k+1}_{l=k-1} \bigg( \sum^\infty_{j=1} |Tf^l_j|^r\bigg)^{\frac{1}{r}} \chi_k \bigg\|_{L^{q(\cdot)}(w)}  \nonumber\\
&\lesssim \sum^{k+1}_{l=k-1}\bigg\| \bigg( \sum^\infty_{j=1} |Tf^l_j|^r\bigg)^{\frac{1}{r}} \chi_k \bigg\|_{L^{q(\cdot)}(w)}  \nonumber\\
&\lesssim \sum^{k+1}_{l=k-1}\bigg\| \bigg( \sum^\infty_{j=1} |f_j|^r\bigg)^{\frac{1}{r}} \chi_l\bigg\|_{L^{q(\cdot)}(w)}.
\end{align}
Thus, by (\ref{vsgh-3}), we have
\begin{align*}
E_2 &\les \sup_{\delta>0} \sup_{k_0\leq 0,k_0\in \zz}2^{-k_0\lambda} \bigg( \delta^\theta \sum_{k=-\infty}^{k_0} 2^{k\alpha(0)p(1+\delta)} \sum^{k+1}_{l=k-1}\bigg\| \bigg( \sum^\infty_{j=1} |f_j|^r\bigg)^{\frac{1}{r}} \chi_l\bigg\|_{L^{q(\cdot)}(w)}^{p(1+\delta)} \bigg)^{\frac{1}{p(1+\delta)}} \\
&\les \sup_{\delta>0} \sup_{k_0\leq 0,k_0\in \zz}2^{-k_0\lambda} \bigg( \delta^\theta \sum_{l=-\infty}^{k_0+1} 2^{l\alpha(0)p(1+\delta)} \bigg\| \bigg( \sum^\infty_{j=1} |f_j|^r\bigg)^{\frac{1}{r}} \chi_l\bigg\|_{L^{q(\cdot)}(w)}^{p(1+\delta)} \bigg)^{\frac{1}{p(1+\delta)}} \\
& \les  \bigg\| \bigg( \sum^\infty_{j=1} |f_j|^r \bigg)^{\frac{1}{r}}\bigg\|_{M\dot{K}_{q(\cdot),\lambda}^{\alpha (\cdot),p),\theta}(w)}.
\end{align*}

Estimate $E_3$. By (\ref{vsgh-2}), we obtain
\[
E_3 \les \sup_{\delta>0} \sup_{k_0\leq 0,k_0\in \zz} 2^{-k_0\lambda} E_{3,1} + \sup_{\delta>0} \sup_{k_0\leq 0,k_0\in \zz} 2^{-k_0\lambda} E_{3,2},\]
where 
\[E_{3,1}:= \bigg( \delta^\theta \sum_{k=-\infty}^{k_0} 2^{k\alpha(0)p(1+\delta)} \bigg( \sum_{l=k+2}^{-1} 2^{(k-l)n\delta_1} \bigg\|\bigg(\sum^\infty_{j=1} |f_j|^{r}\bigg)^{\frac{1}{r}} \chi_{l}\bigg\|_{L^{q(\cdot)}(w)} \bigg)^{p(1+\delta)} \bigg)^{\frac{1}{p(1+\delta)}}\]
and \[E_{3,2}:=\bigg( \delta^\theta \sum_{k=-\infty}^{k_0} 2^{k\alpha(0)p(1+\delta)} \bigg( \sum_{l=0}^\infty 2^{(k-l)n\delta_1}\bigg\|\bigg(\sum^\infty_{j=1} |f_j|^{r}\bigg)^{\frac{1}{r}} \chi_{l}\bigg\|_{L^{q(\cdot)}(w)} \bigg)^{p(1+\delta)} \bigg)^{\frac{1}{p(1+\delta)}}.\]

Estimate $E_{3,1} $. By $\varepsilon_2=n\delta_1+\alpha(0)>0$ and H\"{o}lder's inequality, we have 
\begin{align*}
E_{3,1} 
& \les  \bigg( \delta^\theta \sum_{k=-\infty}^{k_0}  \bigg( \sum_{l=k+2}^{-1} 2^{\alpha(0)l} 2^{(k-l)\varepsilon_2}\bigg\|\bigg(\sum^\infty_{j=1} |f_j|^{r}\bigg)^{\frac{1}{r}} \chi_{l}\bigg\|_{L^{q(\cdot)}(w)} \bigg)^{p(1+\delta)} \bigg)^{\frac{1}{p(1+\delta)}} \\
& \les  \bigg( \delta^\theta \sum_{k=-\infty}^{k_0}  \bigg( \sum_{l=k+2}^{-1} 2^{\alpha(0)lp(1+\delta)} 2^{(k-l)\varepsilon_2 p(1+\delta)/2} \\
& \quad \times \bigg\|\bigg(\sum^\infty_{j=1} |f_j|^{r}\bigg)^{\frac{1}{r}} \chi_{l}\bigg\|_{L^{q(\cdot)}(w)}^{p(1+\delta)} \bigg)  \bigg( \sum_{l=k+2}^{-1} 2^{(k-l)\varepsilon_2 (p(1+\delta))^{\prime}/2} \bigg)^{\frac{p(1+\delta)}{(p(1+\delta))^{\prime}}} \bigg)^{\frac{1}{p(1+\delta)}} \\
& \les \bigg( \delta^\theta \sum_{l=-\infty}^{k_0} 2^{\alpha(0)lp(1+\delta)}  \bigg\|\bigg(\sum^\infty_{j=1} |f_j|^{r}\bigg)^{\frac{1}{r}} \chi_{l}\bigg\|_{L^{q(\cdot)}(w)}^{p(1+\delta)} \bigg)^{\frac{1}{p(1+\delta)}}.
\end{align*}

Estimate $E_{3,2} $. By  $\varepsilon_3=n\delta_1+\alpha_\infty>0$ and  H\"{o}lder's inequality, we have 
\begin{align*}
E_{3,2} 
& \les \bigg( \delta^\theta \sum_{k=-\infty}^{k_0} 2^{k(\alpha(0)+n\delta_1)p(1+\delta)}  \bigg( \sum_{l=0}^\infty 2^{-ln\delta_1} \bigg\|\bigg(\sum^\infty_{j=1} |f_j|^{r}\bigg)^{\frac{1}{r}} \chi_{l}\bigg\|_{L^{q(\cdot)}(w)} \bigg)^{p(1+\delta)} \bigg)^{\frac{1}{p(1+\delta)}} \\
&   \les \bigg( \delta^\theta \bigg( \sum_{l=0}^\infty 2^{l\alpha_\infty} 2^{-l\varepsilon_3} \bigg\|\bigg(\sum^\infty_{j=1} |f_j|^{r}\bigg)^{\frac{1}{r}} \chi_{l}\bigg\|_{L^{q(\cdot)}(w)} \bigg)^{p(1+\delta)} \bigg)^{\frac{1}{p(1+\delta)}}\\
& \les \bigg( \delta^\theta \bigg( \sum_{l=0}^\infty 2^{l\alpha_\infty p(1+\delta)}  \bigg\|\bigg(\sum^\infty_{j=1} |f_j|^{r}\bigg)^{\frac{1}{r}} \chi_{l}\bigg\|_{L^{q(\cdot)}(w)}^{p(1+\delta)} \bigg) \bigg( \sum_{l=0}^\infty 2^{-l\varepsilon_3 (p(1+\delta))^{\prime}} \bigg)^{\frac{p(1+\delta)}{(p(1+\delta))^{\prime}}} \bigg)^{\frac{1}{p(1+\delta)}}.
\end{align*}
Thus, we have 
\[E_3\les  \bigg\| \bigg( \sum^\infty_{j=1} |f_j|^r \bigg)^{\frac{1}{r}}\bigg\|_{M\dot{K}_{q(\cdot),\lambda}^{\alpha (\cdot),p),\theta}(w)}.\]

Estimate $G_1$. By (\ref{vsgh-1}), we obtain
\[G_1 \les \sup_{\delta>0} \sup_{k_0 > 0,k_0\in \zz} G_{1,1}  + \sup_{\delta>0} \sup_{k_0 > 0,k_0\in \zz} G_{1,2},\]
where
\[G_{1,1} :=2^{-k_0\lambda} \bigg( \delta^\theta \sum_{k=0}^{k_0} 2^{k\alpha_\infty p(1+\delta)} \bigg( \sum^{-1}_{l=-\infty} 2^{(l-k)n\delta_2} \bigg\|\bigg(\sum^\infty_{j=1} |f_j|^{r}\bigg)^{\frac{1}{r}} \chi_{l}\bigg\|_{L^{q(\cdot)}(w)} \bigg)^{p(1+\delta)} \bigg)^{\frac{1}{p(1+\delta)}} \]
and \[G_{1,2}:= 2^{-k_0\lambda} \bigg( \delta^\theta \sum_{k=0}^{k_0} 2^{k\alpha_\infty p(1+\delta)} \bigg( \sum^{k-2}_{l=0} 2^{(l-k)n\delta_2} \bigg\|\bigg(\sum^\infty_{j=1} |f_j|^{r}\bigg)^{\frac{1}{r}} \chi_{l}\bigg\|_{L^{q(\cdot)}(w)} \bigg)^{p(1+\delta)} \bigg)^{\frac{1}{p(1+\delta)}}\]

Estimate $G_{1,1}$. By $\varepsilon_4=n\delta_2-\alpha(0)>0$ and  H\"{o}lder's inequality, we have 
\begin{align*}
G_{1,1} & \les  \bigg( \delta^\theta \sum_{k=0}^{k_0} 2^{k(\alpha_\infty-n\delta_2) p(1+\delta)} \bigg( \sum^{-1}_{l=-\infty} 2^{ln\delta_2} \bigg\|\bigg(\sum^\infty_{j=1} |f_j|^{r}\bigg)^{\frac{1}{r}} \chi_{l}\bigg\|_{L^{q(\cdot)}(w)} \bigg)^{p(1+\delta)} \bigg)^{\frac{1}{p(1+\delta)}} \\
&\les 2^{-k_0\lambda} \bigg( \delta^\theta \bigg( \sum^{-1}_{l=-\infty} 2^{ln\delta_2} \bigg\|\bigg(\sum^\infty_{j=1} |f_j|^{r}\bigg)^{\frac{1}{r}} \chi_{l}\bigg\|_{L^{q(\cdot)}(w)} \bigg)^{p(1+\delta)} \bigg)^{\frac{1}{p(1+\delta)}} \\
& \les 2^{-k_0\lambda} \bigg( \delta^\theta \bigg( \sum^{-1}_{l=-\infty} 2^{l\alpha(0)} \bigg\|\bigg(\sum^\infty_{j=1} |f_j|^{r}\bigg)^{\frac{1}{r}} \chi_{l}\bigg\|_{L^{q(\cdot)}(w)} 2^{l\varepsilon_4} \bigg)^{p(1+\delta)}  \bigg)^{\frac{1}{p(1+\delta)}} \\
& \les 2^{-k_0\lambda} \bigg( \delta^\theta \bigg( \sum^{-1}_{l=-\infty} 2^{l\alpha(0)p(1+\delta)} \bigg\|\bigg(\sum^\infty_{j=1} |f_j|^{r}\bigg)^{\frac{1}{r}} \chi_{l}\bigg\|_{L^{q(\cdot)}(w)}^{p(1+\delta)} \bigg)  \bigg( \sum^{-1}_{l=-\infty} 2^{l\varepsilon_4(p(1+\delta))^{\prime}} \bigg)^{\frac{p(1+\delta)}{(p(1+\delta))^{\prime}}}  \bigg)^{\frac{1}{p(1+\delta)}} \\
& \les 2^{-k_0\lambda} \bigg( \delta^\theta \bigg( \sum^{-1}_{l=-\infty} 2^{l\alpha(0)p(1+\delta)} \bigg\|\bigg(\sum^\infty_{j=1} |f_j|^{r}\bigg)^{\frac{1}{r}} \chi_{l}\bigg\|_{L^{q(\cdot)}(w)}^{p(1+\delta)} \bigg) \bigg)^{\frac{1}{p(1+\delta)}}.
\end{align*}

Estimate $G_{1,2}$. By using the same argument as $E_1$ and the fact that $\varepsilon_5= n\delta_2-\alpha_\infty>0$,  we get the desired result. Thus, we obtain
\[G_1  \les  \bigg\| \bigg( \sum^\infty_{j=1} |f_j|^r \bigg)^{\frac{1}{r}}\bigg\|_{M\dot{K}_{q(\cdot),\lambda}^{\alpha (\cdot),p),\theta}(w)}.\]

Estimate $G_2$. By (\ref{vsgh-3}), we obtain
\begin{align*}
G_2 &\les \sup_{\delta>0} \sup_{k_0 > 0,k_0\in \zz}2^{-k_0\lambda}  \bigg( \delta^\theta \sum_{k=0}^{k_0} 2^{k\alpha_\infty p(1+\delta)} \sum^{k+1}_{l=k-1}\bigg\| \bigg( \sum^\infty_{j=1} |f_j|^r\bigg)^{\frac{1}{r}} \chi_l\bigg\|_{L^{q(\cdot)}(w)}^{p(1+\delta)} \bigg)^{\frac{1}{p(1+\delta)}} \\
&\les \sup_{\delta>0} \sup_{k_0\leq 0,k_0\in \zz}2^{-k_0\lambda} \bigg( \delta^\theta \sum_{l=0}^{k_0+1} 2^{l\alpha_\infty p(1+\delta)} \bigg\| \bigg( \sum^\infty_{j=1} |f_j|^r\bigg)^{\frac{1}{r}} \chi_l\bigg\|_{L^{q(\cdot)}(w)}^{p(1+\delta)} \bigg)^{\frac{1}{p(1+\delta)}} \\
& \les  \bigg\| \bigg( \sum^\infty_{j=1} |f_j|^r \bigg)^{\frac{1}{r}}\bigg\|_{M\dot{K}_{q(\cdot),\lambda}^{\alpha (\cdot),p),\theta}(w)}.
\end{align*}

Estimate $G_3$. By using the same argument as $E_{3,1}$, and using the fact that $\varepsilon_6=n\delta_1+\alpha_\infty>0$, we get the desired result.

Combining the estimates of $E$, $F$ and $G$, we get (\ref{vf-t3}).
\end{proof}

\section{Characterizations of  Herz-Morrey-Lizorkin-Triebel Spaces}\label{wbl-s-3}
In this section,  we  introduce weighted grand Herz-Morrey-Triebel-Lizorkin spaces with variable exponent, and obtain their equivalent quasi-norms.
Below, for convenience, we denote by $M\dot{K}_{q(\cdot),\lambda,w}^{\alpha (\cdot),p),\theta}$ the   weighted grand Herz-Morrey spaces $M\dot{K}_{q(\cdot),\lambda}^{\alpha (\cdot),p),\theta}(w)$.
To go on, we recall some notations.

Let $\mathcal{S}(\rn)$ be the Schwartz space of rapidly decreasing functions on $\rn$ and $\mathcal{S}^{\prime}(\rn)$ the space of temperate distributions on $\rn$.
For $f \in \mathcal{S}(\rn)$, let $\mathcal{F} f$ or $\hat{f}$ denote the Fourier transform of $f$ defined by
\[ \mathcal{F} f(\xi) =\widehat{f}(\xi) :=(2 \pi)^{-n / 2} \int_{\rn} e^{-i x \xi} f(x) {\rm d}x, \ \xi \in \rn\]
while $f^\vee(\xi)= \widehat{f} (-\xi)$ denote the inverse Fourier transform of $f$.

\begin{defn}\label{bt-D1} 
Let $\phi_0$ be a function in $\cs(\rn)$ satisfying
$\widehat{\phi} _0(x)=1$ for $|x| \leq 1$ and $\widehat{\phi} _0(x)=0$ for $|x| \geq 2$.
We put $\widehat{\phi} (x)=\widehat{\phi} _0-\widehat{\phi} _0(2x)$ and $\widehat{\phi} _j(x)=\widehat{\phi} (2^{-j}x)$  for all $j \in \nn$.
Then $\{\widehat{\phi} _j\}_{j \in \nn_0}$ is a smooth dyadic resolution of unity and
$\sum_{j=0}^\infty \widehat{\phi} _j(x) =1 \quad \text{for all $x \in \rn$}.$
\end{defn}

\begin{defn}\label{bt-D2}
 Let $q(\cdot)\in\cp(\rn),$ $p\in(1,\infty)$, $\lambda \in [0, \infty)$, $\alpha(\cdot) \in L^\infty(\rn)$, $\theta>0$, $s \in \rr$ and $\beta \in (0,\infty)$.
Furthermore let $\{\phi_j\}^\infty_{j=0}$ be the system as Definition \ref{bt-D1} and $w$ be a weight.
 The  weighted grand Herz-Morrey-Triebel-Lizorkin space with variable exponents $M\dot{K}_{q(\cdot),\lambda,w}^{\alpha (\cdot),p),\theta}F^s_\beta(\rn)$  is defined to be the set of all $f \in \cs^{\prime}(\rn)$ such that
\[\|f\|_{M\dot{K}_{q(\cdot),\lambda,w}^{\alpha (\cdot),p),\theta}F^s_\beta}:=\|\{ 2^{js}\phi_j \ast f\}^\infty_{j=0}\|_{M\dot{K}_{q(\cdot),\lambda,w}^{\alpha (\cdot),p),\theta}(\ell^\beta)}<\infty.\]

\end{defn}

\begin{defn}
A pair of functions $(\phi,\Phi)$ is said to be admissible, if $\phi,$ $\Phi \in \mathcal{S}(\rn)$ satisfy
\begin{equation}\label{vsgh-15}
\supp \widehat{\phi} \subseteq \{ \xi \in \rn : 2^{-1} \leq |\xi| \leq 2\} \text{ and } | \widehat{\phi} | \geq C>0 \text{ when } \frac{3}{5} \leq |\xi| \leq \frac{5}{3}
\end{equation}
and
\begin{equation}\label{vsgh-16}
\supp \widehat{\Phi} \subseteq \{ \xi \in \rn :  |\xi| \leq 2\} \text{ and } | \widehat{\Phi} | \geq C>0 \text{ when } |\xi| \leq \frac{5}{3},
\end{equation}
where $C$ is a positive constant independent of $\xi \in \rn$. 
Throughout the article,  we set   $\tilde{\phi}(x):=\overline{{\phi(-x)}}$.
\end{defn}

The following two lemmas come from {\cite[Lemmas 2.41 and 2.45]{dd1}}.
\begin{lem}\label{tt-L3}
Let $x \in \rn$, $N > 0$, $m > n$ and $\omega \in \cs(\rn)$. Then there exists a positive
constant $C$ independent of $N$ and $x$ such that for all $f \in L^1_{\rm loc} (\rn)$,
\[ |\omega_N \ast f(x)| \leq C \mathcal{M}f(x), \]
where $\omega_N=N^n \omega(N\cdot)$.
\end{lem}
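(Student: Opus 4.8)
The plan is to prove Lemma \ref{tt-L3} by the standard device of dominating the convolution by a sum of averages over dyadic dilates of the support ball, and then bounding each average by the Hardy--Littlewood maximal function. First I would exploit that $\omega \in \cs(\rn)$ implies, for any chosen $m > n$, a decay bound $|\omega(z)| \leq C_m (1+|z|)^{-m}$ with $C_m$ depending only on $\omega$ and $m$; since $\omega_N(z) = N^n \omega(Nz)$, this gives $|\omega_N(z)| \leq C_m N^n (1+N|z|)^{-m}$. Fix $x \in \rn$ and $N > 0$. Writing out the convolution,
\[
|\omega_N \ast f(x)| \leq \int_{\rn} |\omega_N(x-y)|\,|f(y)|\,{\rm d}y \leq C_m N^n \int_{\rn} \frac{|f(y)|}{(1+N|x-y|)^{m}}\,{\rm d}y.
\]

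Next I would dyadically decompose the integral according to the size of $N|x-y|$: split $\rn$ into the ball $B(x, 1/N)$ and the annuli $A_i := \{ y : 2^{i-1}/N < |x-y| \leq 2^i/N\}$ for $i \in \nn$. On $B(x,1/N)$ the kernel factor $(1+N|x-y|)^{-m}$ is at most $1$, so that piece contributes at most $C_m N^n \int_{B(x,1/N)} |f(y)|\,{\rm d}y = C_m N^n |B(x,1/N)| \cdot \frac{1}{|B(x,1/N)|}\int_{B(x,1/N)}|f| \leq C \mathcal{M}f(x)$, since $N^n|B(x,1/N)| = N^n \cdot c_n N^{-n} = c_n$. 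On the annulus $A_i$, $N|x-y| > 2^{i-1}$, so $(1+N|x-y|)^{-m} \leq 2^{-m(i-1)}$, and $A_i \subset B(x, 2^i/N)$ with $|B(x,2^i/N)| = c_n 2^{in} N^{-n}$; hence the $A_i$-piece is bounded by $C_m N^n 2^{-m(i-1)} \int_{B(x,2^i/N)}|f| \leq C_m 2^{-m(i-1)} 2^{in} \mathcal{M}f(x) = C_m 2^m 2^{(n-m)i} \mathcal{M}f(x)$. Summing over $i \geq 1$ and using $m > n$ so that $\sum_{i \geq 1} 2^{(n-m)i} < \infty$, the total is $\leq C \mathcal{M}f(x)$ with $C$ depending only on $\omega$, $m$, $n$ — and crucially independent of both $N$ and $x$, which is exactly the assertion.

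There is no real obstacle here; the only point requiring a little care is making sure the constant is genuinely uniform in $N$, which the computation above delivers because every appearance of $N$ cancels between the $N^n$ prefactor and the measures of the dilated balls. Since this is quoted from \cite[Lemma 2.41]{dd1}, in the paper it suffices to cite that reference rather than reproduce the argument, but the sketch above records the mechanism for the reader: Schwartz decay plus a dyadic annular decomposition reduce the convolution to a geometric series of maximal-function averages.
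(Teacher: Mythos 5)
Your argument is correct and is the standard proof of this classical majorization: Schwartz decay gives $|\omega_N(z)|\lesssim N^n(1+N|z|)^{-m}$, and the dyadic annular decomposition turns the convolution into a geometric series of ball averages, each dominated by $\mathcal{M}f(x)$, with every power of $N$ cancelling. The paper itself offers no proof — it only cites \cite[Lemma 2.41]{dd1} — and your sketch is exactly the mechanism behind that reference, so there is nothing to correct.
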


\begin{lem}\label{tt-L4}
Let $r, R, N > 0$, $m > n$ and $\vartheta, \omega \in \cs(\rn)$ with $\supp \cf \omega \subset \{\xi \in \rn : |\xi| \leq 2\}$. Then there exists $C = C(r, m, n) > 0$ such that for all $g \in \cs^{\prime}(\rn)$,
\[ |\vartheta_R \ast \omega_N \ast g(x)| \leq C \max\{1, (N/R)^m\} (\eta_{N,m} \ast |\omega_N \ast g|^r(x))^{1/r}, \quad x \in \rn, \]
where $\vartheta_R=R^n \vartheta(R\cdot)$, $\omega_N=N^n\omega(N \cdot)$ and $\eta_{N,m}=N^n(1+N|\cdot|)^{-m}$.
\end{lem}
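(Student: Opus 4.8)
The plan is to regard $h:=\omega_N\ast g$ as a band-limited distribution, to reduce the left-hand side to a convolution of $h$ against a single kernel living at frequency scale $N$, and only at the very end to recover the power $r$ through a sub-mean-value property.

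First I would set up a reproducing identity. Since $\supp\cf\omega\subset\{\xi:|\xi|\le 2\}$, we have $\cf(\omega_N)(\xi)=\cf\omega(\xi/N)$ supported in $\{|\xi|\le 2N\}$, so $\supp\cf h\subset\{|\xi|\le 2N\}$. Fix once and for all a function $\Lambda\in\cs(\rn)$ with $\cf\Lambda\equiv1$ on $\{|\xi|\le2\}$ and $\supp\cf\Lambda\subset\{|\xi|\le4\}$, and set $\Lambda_N:=N^n\Lambda(N\cdot)$. Then $\cf(\Lambda_N)(\xi)=\cf\Lambda(\xi/N)\equiv1$ on $\{|\xi|\le2N\}\supseteq\supp\cf h$, whence $h=\Lambda_N\ast h$. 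Consequently $\vartheta_R\ast\omega_N\ast g=\Theta\ast h$, where $\Theta:=\vartheta_R\ast\Lambda_N$ has spectrum in $\{|\xi|\le4N\}$ and is therefore band-limited at scale $N$, no matter how large or small $R$ is.

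The second step is the pointwise kernel bound $|\Theta(y)|\le C\max\{1,(N/R)^m\}\,\eta_{N,M}(y)$ for $M$ as large as convenient. If $R\ge N$ this is immediate, since $\Theta$ is a bump concentrated at scale $N$ with $L^1$-size $O(1)$, so the prefactor equals $1$. If $R<N$ I would estimate $\Theta(y)=\int_{\rn}\vartheta_R(y-w)\Lambda_N(w)\,dw$ using the Schwartz decay of both factors; the coarser scale $R$ governs the spatial decay, giving $|\Theta(y)|\lesssim R^n(1+R|y|)^{-M}$, which I then transfer to scale $N$ by the elementary inequality $1+R|y|\ge (R/N)(1+N|y|)$ valid for $R\le N$. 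This is exactly what produces the factor $\max\{1,(N/R)^m\}$. Inserting the bound into $\Theta\ast h$ yields the power-one estimate $|\vartheta_R\ast\omega_N\ast g(x)|\lesssim\max\{1,(N/R)^m\}\,(\eta_{N,M}\ast|h|)(x)$.

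It remains to upgrade the power one to the power $r$. When $r\ge1$ this is painless: since $\|\eta_{N,m}\|_{L^1(\rn)}=O(1)$ whenever $m>n$, Hölder's inequality applied with the normalized measure $\eta_{N,m}/\|\eta_{N,m}\|_{L^1}$ gives $\eta_{N,m}\ast|h|\le C(\eta_{N,m}\ast|h|^r)^{1/r}$. For $0<r<1$ this monotonicity fails and the band-limited structure must be used in earnest: I would partition $\rn$ into cubes of sidelength $\sim1/N$, invoke the local Plancherel--P\'olya (Nikolskii) estimate $\sup_{Q}|h|^r\lesssim N^n\int_{Q^{\ast}}|h|^r$ available for spectra in $\{|\xi|\le cN\}$, and reassemble the pieces with $\sum_\nu a_\nu\le(\sum_\nu a_\nu^r)^{1/r}$, the weight $(1+N|\cdot|)^{-m}$ with $m>n$ making the sum over cubes converge. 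The hard part is exactly this last step for $0<r<1$, together with the simultaneous bookkeeping of the several exponents involved (the Schwartz-decay order of $\Theta$, the order of the weight in the sub-mean-value inequality, and the exponent in $\max\{1,(N/R)^m\}$), so that the single hypothesis $m>n$ suffices and the prefactor carries the exponent exactly $m$; this is where band-limitedness of $h$ enters essentially and where the bulk of the technical care lies.
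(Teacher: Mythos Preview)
The paper does not prove this lemma at all: immediately before the statement it writes ``The following two lemmas come from \cite[Lemmas 2.41 and 2.45]{dd1},'' and no argument is supplied. There is therefore nothing in the paper to compare your proposal against.

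For what it is worth, your outline is the standard direct route and is essentially correct. One small bookkeeping point: in the case $R<N$, from $|\Theta(y)|\lesssim R^n(1+R|y|)^{-M}$ together with $1+R|y|\ge(R/N)(1+N|y|)$ you obtain
\[
|\Theta(y)|\lesssim N^n\,(N/R)^{M-n}\,(1+N|y|)^{-M},
\]
so the prefactor exponent and the decay order move in tandem; taking ``$M$ as large as convenient'' would overshoot the prefactor. If you simply set $M=m$ from the start you get $|\Theta(y)|\lesssim(N/R)^{m-n}\eta_{N,m}(y)\le(N/R)^{m}\eta_{N,m}(y)$, and then the Plancherel--P\'olya\,/\,Nikolskii step (which needs only $m>n$) already delivers the claimed exponent $m$ with no further juggling. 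You flag this issue yourself in the last paragraph; the resolution is just to fix $M=m$ rather than to take it large.
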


Following \cite{fj}, given an admissible pair $(\phi,\Phi)$ we can select another admissible pair $(\psi,\Psi)$ such that\begin{equation}\label{vsgh-17}
\cf\tilde{\Phi}(\xi) \cf\Psi (\xi)+\sum_{\nu =1}^\infty \cf\tilde{\phi}(2^{-\nu}\xi) \cf \psi(2^{-\nu}\xi)=1, \quad \xi \in \rn.
\end{equation}

The following lemma is the Calder\'on reproducing formula; see {\cite[(12.4)]{fj}} and {\cite[Lemma 2.3]{ysy-1}}.
\begin{lem}\label{tt-L6}
Let $\Phi, \Psi \in \cs(\rn)$ satisfy (\ref{vsgh-15}) and $\phi, \psi \in \cs(\rn)$ satisfy (\ref{vsgh-16}) such that (\ref{vsgh-17}) holds. Then for all $f \in \cs^{\prime}(\rn)$,
\begin{align*}
f &= \tilde{\Phi} \ast \Psi \ast f + \sum_{k=1}^\infty \tilde{\phi}_k \ast \psi_k \ast f \\
&= \sum_{m \in \zn} \tilde{\Phi}\ast f(m) \Psi_m + \sum_{k=1}^\infty 2^{-kn/2} \sum_{m\in \zn} \tilde{\phi}_k \ast f(2^{-k}m) \psi_{k,m}
\end{align*}
in $\cs^{\prime}(\rn)$, where $\Psi_m=\Psi(\cdot-m)$ and $\psi_{k,m}=2^{kn/2} \psi(2^k \cdot-m)$, $m\in \zn$, $k\in \nn$.
\end{lem}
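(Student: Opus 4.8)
The plan is to split the argument into two steps: first establish the \emph{continuous} reproducing identity
\[
f = \tilde{\Phi} \ast \Psi \ast f + \sum_{k=1}^\infty \tilde{\phi}_k \ast \psi_k \ast f \qquad \text{in } \cs^{\prime}(\rn),
\]
by passing to the Fourier side, where it becomes precisely the partition of unity (\ref{vsgh-17}); then discretize each frequency block $\tilde{\Phi}\ast\Psi\ast f$ and $\tilde{\phi}_k\ast\psi_k\ast f$ by a sampling (Poisson summation) argument, which converts the convolutions into the stated sums over $\zn$.

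For the first step I would apply $\cf$ and the convolution theorem, so that $\cf(\tilde{\Phi}\ast\Psi\ast f)$ and $\cf(\tilde{\phi}_k\ast\psi_k\ast f)$ become, up to the fixed constant dictated by the normalization of $\cf$, $\cf\tilde{\Phi}\,\cf\Psi\,\cf f$ and $\cf\tilde{\phi}(2^{-k}\cdot)\,\cf\psi(2^{-k}\cdot)\,\cf f$. Setting
\[
S_N(\xi):=\cf\tilde{\Phi}(\xi)\cf\Psi(\xi)+\sum_{k=1}^N\cf\tilde{\phi}(2^{-k}\xi)\cf\psi(2^{-k}\xi),\qquad N\in\nn,
\]
one sees that $S_N\to 1$ pointwise by (\ref{vsgh-17}) (for fixed $\xi$ only finitely many terms are nonzero, since $\cf\phi$ is supported in $\{2^{-1}\le|\xi|\le 2\}$), and — from the finite overlap of the annuli $\{2^{k-1}\le|\xi|\le 2^{k+1}\}$ together with the scaling identity $\partial^\gamma[\cf\tilde{\phi}(2^{-k}\cdot)\cf\psi(2^{-k}\cdot)]=2^{-k|\gamma|}(\partial^\gamma(\cf\tilde{\phi}\,\cf\psi))(2^{-k}\cdot)$ — that $\{S_N\}_N$ is bounded in $C^\infty(\rn)$ with all its derivatives, uniformly in $N$. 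This gives $S_Ng\to g$ in $\cs(\rn)$ for each $g\in\cs(\rn)$, hence $S_N\cf f\to\cf f$ in $\cs^{\prime}(\rn)$; applying $\cf^{-1}$ then yields the displayed identity with the series converging in $\cs^{\prime}(\rn)$.

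For the second step I would treat the blocks separately. Writing $h:=\tilde{\Phi}\ast f$, so $\tilde{\Phi}\ast\Psi\ast f=\Psi\ast h$: since $\cf h$ is supported in $\{|\xi|\le 2\}$, the Paley--Wiener--Schwartz theorem makes $h$ a $C^\infty$ function of polynomial growth, so $\{h(m)\}_{m\in\zn}$ is polynomially bounded and $\sum_{m\in\zn}h(m)\Psi_m$ converges in $\cs^{\prime}(\rn)$; taking $\cf$ and applying Poisson summation in $\cs^{\prime}(\rn)$ gives $\cf(\sum_{m\in\zn}h(m)\Psi_m)=c\,\cf\Psi\cdot\sum_{j\in\zn}\cf h(\cdot-2\pi j)$, and since $\{\,|\xi|\le 2\,\}$ is disjoint from $\{\,|\xi-2\pi j|\le 2\,\}$ for $j\ne 0$ (because $2\pi>4$), only the $j=0$ term survives, giving $c\,\cf\Psi\,\cf h=\cf(\Psi\ast h)$; inverting $\cf$ yields $\tilde{\Phi}\ast\Psi\ast f=\sum_{m\in\zn}(\tilde{\Phi}\ast f)(m)\Psi_m$. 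For the dyadic blocks I would set $h_k:=\tilde{\phi}_k\ast f$ (so $\tilde{\phi}_k\ast\psi_k\ast f=\psi_k\ast h_k$, with $\cf h_k$ and $\cf\psi_k$ supported in $\{2^{k-1}\le|\xi|\le 2^{k+1}\}$) and repeat the Poisson-summation argument on the lattice $2^{-k}\zn$, whose dual is $2^{k+1}\pi\,\zn$; since $2^{k+1}\pi>2\cdot 2^{k+1}$ — again because $\pi>2$ — the shifted copies of $\cf h_k$ miss $\supp\cf\psi_k$, leaving $\psi_k\ast h_k=2^{-kn}\sum_{m\in\zn}h_k(2^{-k}m)\psi_k(\cdot-2^{-k}m)$. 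As $\psi_k(\cdot-2^{-k}m)=2^{kn}\psi(2^k\cdot-m)=2^{kn/2}\psi_{k,m}$, this is $2^{-kn/2}\sum_{m\in\zn}(\tilde{\phi}_k\ast f)(2^{-k}m)\psi_{k,m}$; summing over $k$ (the series converges since it coincides with the one from the first step) and combining with that step yields the second identity.

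The main obstacle is not algebraic — on the Fourier side everything reduces to (\ref{vsgh-17}) and to the disjointness of translated balls — but analytic: the convergence bookkeeping in $\cs^{\prime}(\rn)$. Specifically, one must verify the uniform $C^\infty$-bounds on the partial sums $S_N$ that legitimize $S_N\cf f\to\cf f$, check via Paley--Wiener--Schwartz that $\tilde{\Phi}\ast f$ and $\tilde{\phi}_k\ast f$ are polynomially bounded smooth functions so that the lattice sums are genuine tempered distributions, and apply Poisson summation at the distributional level. All the support/margin computations hinge on the single strict inequality $\pi>2$, which leaves just enough room for the periodized copies of the band-limited pieces to vanish against the relevant multipliers.
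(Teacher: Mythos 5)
Your proof is correct and is essentially the standard argument for the Calder\'on reproducing formula ($\varphi$-transform identity) of Frazier--Jawerth, which is exactly what the paper relies on: the paper offers no proof of Lemma \ref{tt-L6}, citing \cite{fj} and \cite{ysy-1}, where the identity is obtained by the same two steps you use (the multiplier identity (\ref{vsgh-17}) on the Fourier side for the continuous formula, then sampling of band-limited distributions via Poisson summation on the lattices $2^{-k}\zn$ for the discrete one). The only points worth flagging are cosmetic: the statement as printed has the roles of (\ref{vsgh-15}) and (\ref{vsgh-16}) interchanged (you correctly read $\widehat{\phi}$ as supported in the annulus and $\widehat{\Phi}$ in the ball), and the normalization constants coming from the paper's choice of Fourier transform must be absorbed into (\ref{vsgh-17}), as you note.
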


\begin{thm}\label{vT2}
 Let $q(\cdot)\in\cp(\rn),$ $p\in(1,\infty)$, $\lambda \in [0, \infty)$, $\alpha(\cdot) \in L^{\infty}(\mathbb{R}^{n})\cap C^{\log}_0(\rn)\cap C^{\log}_{\infty}(\rn)$, $\theta \in (0,\infty)$, $\beta \in (0,\infty]$,  $s \in \rn$, $w \in A_{q(\cdot)}$, $- n\delta_1 <\alpha(0),\ \alpha_\infty <n\delta_2$,  where $\delta_1,\ \delta_2 \in (0,1)$ are the constants in Lemma \ref{tt-L5} for $q(\cdot)$.
 A tempered distribution $f$ belongs to $M\dot{K}_{q(\cdot),\lambda,w}^{\alpha (\cdot),p),\theta}F^s_\beta(\rn)$ if and only if $\|f\|^\star_{M\dot{K}_{q(\cdot),\lambda,w}^{\alpha (\cdot),p),\theta}F^s_\beta(\rn)}<\infty $, where
 \[\|f\|^\star_{M\dot{K}_{q(\cdot),\lambda,w}^{\alpha (\cdot),p),\theta}F^s_\beta(\rn)}:=\|\{ 2^{js}\phi_j \ast f\}^\infty_{j=0}\|_{M\dot{K}_{q(\cdot),\lambda,w}^{\alpha (\cdot),p),\theta}(\ell^\beta)},\] 
   $\phi$ and $\Phi$  satisfy  (\ref{vsgh-15}) and (\ref{vsgh-16}), respectively and  $\phi_j=2^{jn} \phi(2^j \cdot)$, $j\in \nn$, when $j=0$, $\phi_0$ is replaced by $\Phi$.
 Furthermore, the quasi-norms $\|f\|_{M\dot{K}_{q(\cdot),\lambda,w}^{\alpha (\cdot),p),\theta}F^s_\beta(\rn)}$ and $\|f\|^\star_{M\dot{K}_{q(\cdot),\lambda,w}^{\alpha (\cdot),p),\theta}F^s_\beta(\rn)}$ are equivalent.
\end{thm}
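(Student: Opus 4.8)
The plan is to prove the two-sided inequality
\[
\|f\|_{M\dot{K}_{q(\cdot),\lambda,w}^{\alpha (\cdot),p),\theta}F^s_\beta(\rn)} \sim \|f\|^\star_{M\dot{K}_{q(\cdot),\lambda,w}^{\alpha (\cdot),p),\theta}F^s_\beta(\rn)}
\]
by a standard argument adapted to the grand Herz-Morrey setting: one shows that the definition is independent of the admissible resolution of unity, and that the two candidate quasi-norms dominate each other. The core analytic device is the Calder\'on reproducing formula of Lemma \ref{tt-L6}, combined with the pointwise convolution estimates of Lemmas \ref{tt-L3} and \ref{tt-L4}, and finally the vector-valued maximal inequality on the grand Herz-Morrey space supplied by Corollary \ref{vC1}.

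First I would fix two admissible pairs $(\phi,\Phi)$ and $(\psi,\Psi)$ linked by the identity (\ref{vsgh-17}), and let $\{\varphi_j\}_{j\ge0}$ be another dyadic system as in Definition \ref{bt-D1} used in the definition of $M\dot{K}_{q(\cdot),\lambda,w}^{\alpha (\cdot),p),\theta}F^s_\beta$. Using Lemma \ref{tt-L6} one writes, for each $j\ge0$,
\[
\varphi_j\ast f = \varphi_j\ast\tilde\Phi\ast\Psi\ast f + \sum_{k=1}^\infty \varphi_j\ast\tilde\phi_k\ast\psi_k\ast f,
\]
and the support conditions (\ref{vsgh-15})--(\ref{vsgh-16}) force $\varphi_j\ast\tilde\phi_k\ast\psi_k\ast f=0$ unless $|j-k|$ is bounded by an absolute constant (say $|j-k|\le 2$); similarly only finitely many terms with small $k$ survive against $\tilde\Phi\ast\Psi$. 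For the surviving terms, Lemma \ref{tt-L4} (with a suitable choice of $r\in(0,\min\{1,\beta\})$ bounded below in terms of $n$ and the parameters) gives the pointwise bound
\[
|\varphi_j\ast\tilde\phi_k\ast(\psi_k\ast f)(x)| \lesssim 2^{-|j-k|M}\,\big(\eta_{2^k,m}\ast|\psi_k\ast f|^r(x)\big)^{1/r},
\]
and then Lemma \ref{tt-L3} (or the standard majorization $\eta_{N,m}\ast g\lesssim \mathcal{M}g$ when $m>n$) converts the $\eta$-convolution into the Hardy-Littlewood maximal operator $\mathcal{M}_r$. Multiplying by $2^{js}$, summing the geometric factor $2^{-|j-k|(M-|s|)}$ in $\ell^\beta$, and applying Corollary \ref{vC1} to the sequence $\{2^{ks}\mathcal{M}_r(\psi_k\ast f)\}_k$ (legitimate since $w\in A_{q(\cdot)}$ implies $\mathcal{M}$ is bounded on $L^{q(\cdot)}(w)$ and $L^{q'(\cdot)}(w^{-1})$, so Lemma \ref{vL13} applies and hence so does Corollary \ref{vC1}) yields
\[
\|\{2^{js}\varphi_j\ast f\}_j\|_{M\dot{K}_{q(\cdot),\lambda,w}^{\alpha(\cdot),p),\theta}(\ell^\beta)} \lesssim \|\{2^{ks}\psi_k\ast f\}_k\|_{M\dot{K}_{q(\cdot),\lambda,w}^{\alpha(\cdot),p),\theta}(\ell^\beta)}.
\]
Running the argument with the roles of the two systems interchanged gives the reverse inequality, proving both that the space is well defined independently of the admissible system and that $\|f\|\sim\|f\|^\star$; in particular $f\in M\dot{K}_{q(\cdot),\lambda,w}^{\alpha(\cdot),p),\theta}F^s_\beta(\rn)$ iff $\|f\|^\star<\infty$.

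The main obstacle I expect is the interchange of the infinite $k$-sum with the grand Herz-Morrey quasi-norm: unlike an ordinary norm, $\|\cdot\|_{M\dot{K}_{q(\cdot),\lambda,w}^{\alpha(\cdot),p),\theta}}$ carries the extra supremum over $\delta>0$ and the factor $\delta^\theta$, so one must be careful that the geometric decay $2^{-|j-k|(M-|s|)}$ is summed uniformly in $\delta$ and $k_0$ before Corollary \ref{vC1} is invoked. This is handled exactly as in the proof of Theorem \ref{vT1}: one fixes $\delta$ and $k_0$, pulls the $\ell^\beta$ (or, when $\beta\le1$, the $\beta$-triangle) estimate through, uses H\"older in the $k$-variable against the summable weight to absorb the decay, and only then passes to the supremum over $\delta$ and $k_0$; the decay constant $M$ from Lemma \ref{tt-L4} can be taken as large as needed by choosing $m$ large, so the condition $M>|s|$ is harmless. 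A secondary technical point is to justify that the Calder\'on formula converges in $\cs'(\rn)$ and that all the pointwise estimates hold for the relevant $r$; this is routine given Lemmas \ref{tt-L3}, \ref{tt-L4} and \ref{tt-L6}, which are quoted from \cite{dd1} and \cite{fj}.
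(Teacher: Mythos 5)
Your proposal follows essentially the same route as the paper's proof: the Calder\'on reproducing formula of Lemma \ref{tt-L6}, the support conditions reducing the sum to finitely many neighbouring dyadic blocks, the pointwise majorization by $\mathcal{M}_t$ via Lemmas \ref{tt-L4} and \ref{tt-L3} with $t<\min\{q^-,\beta\}$, and finally the vector-valued maximal inequality of Corollary \ref{vC1}. The only cosmetic difference is that you carry an explicit geometric decay factor $2^{-|j-k|M}$ and a discussion of interchanging the infinite $k$-sum with the grand quasi-norm, which the paper avoids by noting that the support conditions already make the $k$-sum finite.
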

\begin{proof}
Let $\Phi, \Psi \in \cs(\rn)$ satisfy (\ref{vsgh-15}) and $\phi, \psi \in \cs(\rn)$ satisfy (\ref{vsgh-16}) such that (\ref{vsgh-17}) holds. 
By Lemma \ref{tt-L6} and inspecting the support conditions, we have
\begin{equation*}
 \phi_k\ast f= \sum_{j=k-1}^{k+1}  \phi_k \ast \tilde{\phi}_j \ast \psi_j \ast f +
\begin{cases}
0 & \text{if } k \geq 3  \\
 \phi_k \ast \tilde{\Phi} \ast \Psi \ast f,& \text{if } k \in\{1,2\}
\end{cases}
\end{equation*}
and 
\[  \phi_0\ast f=    \phi_0 \ast \tilde{\phi}_1 \ast \psi_1 \ast f +  \phi_0 \ast \tilde{\Phi} \ast \Psi \ast f .\]
For $j \in \{k-1,k,k+1\}$, $k \geq 3$, by Lemmas \ref{tt-L4} and \ref{tt-L3}, we obtain
\[  | \phi_k \ast \tilde{\phi}_j \ast \psi_j \ast f| \les \mathcal{M}_t(\tilde{\phi}_j \ast f), \quad t\in(0,\infty).\]
Similarly, when $k \in \{0,1,2\}$, we have
\[| \phi_k \ast \tilde{\Phi} \ast \Psi \ast f| + | \phi_0 \ast \tilde{\phi}_1 \ast \psi_1 \ast f| \les \mathcal{M}_t(\tilde{\Phi}\ast f) + \mathcal{M}_t(\tilde{\phi}_1 \ast f), \quad t\in(0,\infty).\]
We take $t \in (0,\min\{q^-,\beta\})$, then by Corollary \ref{vC1}, we obtain
\[\|f\|^\star_{M\dot{K}_{q(\cdot),\lambda,w}^{\alpha (\cdot),p),\theta}F^s_\beta(\rn)} \les \|f\|_{M\dot{K}_{q(\cdot),\lambda,w}^{\alpha (\cdot),p),\theta}F^s_\beta(\rn)}.\]
The opposite inequality is followed by the same arguments. Notice that we use the smooth resolution of unity (Definition \ref{bt-D1}). This completes the proof.
\end{proof}

\begin{cor}
Let $\{\varpi_j\}_{j \in \nn_0}$ and $\{\phi_j\}_{j \in \nn_0}$ be two resolutions of unity.
 Let $q(\cdot)\in\cp(\rn),$ $p\in(1,\infty)$, $\lambda \in [0, \infty)$, $\alpha(\cdot) \in L^{\infty}(\mathbb{R}^{n})\cap C^{\log}_0(\rn)\cap C^{\log}_{\infty}(\rn)$, $\theta \in (0,\infty)$, $\beta \in (0,\infty]$, $s\in \rn$, $w \in A_{q(\cdot)}$, $- n\delta_1 <\alpha(0),\ \alpha_\infty <n\delta_2$,  where $\delta_1,\ \delta_2 \in (0,1)$ are the constants in Lemma \ref{tt-L5} for $q(\cdot)$. Then
 \[\|f\|^\varpi_{M\dot{K}_{q(\cdot),\lambda,w}^{\alpha (\cdot),p),\theta} F^s_\beta (\rn)} \sim \|f\|^\phi_{M\dot{K}_{q(\cdot),\lambda,w}^{\alpha (\cdot),p),\theta}F^s_\beta (\rn)} \sim \|f\|^\star_{M\dot{K}_{q(\cdot),\lambda,w}^{\alpha (\cdot),p),\theta}F^s_\beta (\rn)}.\]
\end{cor}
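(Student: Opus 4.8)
The plan is to deduce the corollary directly from Theorem \ref{vT2} together with a symmetry argument in the two resolutions of unity. The statement to be proved is that for any two smooth dyadic resolutions of unity $\{\varpi_j\}_{j\in\nn_0}$ and $\{\phi_j\}_{j\in\nn_0}$, the three quasi-norms $\|f\|^\varpi$, $\|f\|^\phi$ and $\|f\|^\star$ agree up to multiplicative constants, where $\|f\|^\star$ is the quasi-norm built from an admissible pair $(\phi,\Phi)$ satisfying (\ref{vsgh-15}) and (\ref{vsgh-16}). Theorem \ref{vT2} already gives $\|f\|^\phi_{M\dot{K}_{q(\cdot),\lambda,w}^{\alpha(\cdot),p),\theta}F^s_\beta}\sim\|f\|^\star_{M\dot{K}_{q(\cdot),\lambda,w}^{\alpha(\cdot),p),\theta}F^s_\beta}$ once one observes that the defining quasi-norm $\|\cdot\|_{M\dot{K}F^s_\beta}$ of Definition \ref{bt-D2} is exactly $\|\cdot\|^\phi$ for the resolution-of-unity system $\{\phi_j\}$ of Definition \ref{bt-D1}; indeed Theorem \ref{vT2} is stated and proved for a general system $\{\phi_j\}$ coming from Definition \ref{bt-D1}, and its proof only uses the support/size properties shared by every such system. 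So the first step is simply to record that Theorem \ref{vT2}, applied with $\{\phi_j\}=\{\varpi_j\}$, yields $\|f\|^\varpi\sim\|f\|^\star$, and applied with $\{\phi_j\}$ the given system, yields $\|f\|^\phi\sim\|f\|^\star$. Chaining these two equivalences gives $\|f\|^\varpi\sim\|f\|^\phi\sim\|f\|^\star$, which is the claim.

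In more detail, I would proceed as follows. First, fix an admissible pair $(\phi,\Phi)$ as in (\ref{vsgh-15})--(\ref{vsgh-16}) and form $\|f\|^\star$ as in Theorem \ref{vT2}; this is a fixed reference quasi-norm. Second, apply Theorem \ref{vT2} verbatim with the system $\{\phi_j\}_{j\in\nn_0}$ built from the given resolution of unity: the hypotheses on $q(\cdot)$, $p$, $\lambda$, $\alpha(\cdot)$, $\theta$, $\beta$, $s$, $w$ and on $\delta_1,\delta_2$ are exactly those assumed in the corollary, so the theorem applies and gives $\|f\|^\phi_{M\dot{K}_{q(\cdot),\lambda,w}^{\alpha(\cdot),p),\theta}F^s_\beta}\sim\|f\|^\star_{M\dot{K}_{q(\cdot),\lambda,w}^{\alpha(\cdot),p),\theta}F^s_\beta}$. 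Third, repeat the same with $\{\varpi_j\}_{j\in\nn_0}$ in place of $\{\phi_j\}_{j\in\nn_0}$: since $\{\varpi_j\}$ is also a resolution of unity subordinate to dyadic annuli (with $\widehat{\varpi}_0$ supported in $\{|x|\le 2\}$ and equal to $1$ on $\{|x|\le 1\}$, and $\widehat{\varpi}_j=\widehat{\varpi}(2^{-j}\cdot)$ for $j\ge 1$ with $\widehat{\varpi}$ supported in a dyadic annulus), the Calderón reproducing argument of the proof of Theorem \ref{vT2} goes through unchanged — the only properties used are the annular support, the smoothness, and the decay furnished by Lemmas \ref{tt-L3} and \ref{tt-L4}, all of which hold for any resolution of unity. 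Hence $\|f\|^\varpi\sim\|f\|^\star$. Fourth, combine: $\|f\|^\varpi\sim\|f\|^\star\sim\|f\|^\phi$, and in particular membership in $M\dot{K}_{q(\cdot),\lambda,w}^{\alpha(\cdot),p),\theta}F^s_\beta(\rn)$ is independent of which resolution of unity is used.

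The only point requiring a modicum of care — and the closest thing to an obstacle — is confirming that the proof of Theorem \ref{vT2} genuinely does not depend on the specific normalization of the resolution of unity, i.e. that one may replace $\{\phi_j\}$ by an arbitrary $\{\varpi_j\}$ of the same type. Inspecting that proof: one writes $\varpi_k\ast f$ via the Calderón formula of Lemma \ref{tt-L6} as a finite sum over $j\in\{k-1,k,k+1\}$ of $\varpi_k\ast\tilde\phi_j\ast\psi_j\ast f$ (plus low-frequency terms for small $k$), invokes the Fourier-support condition $\supp\widehat\psi_j\subset$ a dyadic annulus to kill all but three indices, and then estimates each term by $\mathcal{M}_t(\tilde\phi_j\ast f)$ using Lemmas \ref{tt-L4} and \ref{tt-L3}; the vector-valued maximal bound Corollary \ref{vC1} on $M\dot{K}_{q(\cdot),\lambda}^{\alpha(\cdot),p),\theta}(w)$ with $t\in(0,\min\{q^-,\beta\})$ then closes the estimate, and the converse inequality follows symmetrically using the smooth resolution property $\sum_j\widehat\varpi_j\equiv 1$. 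Each of these ingredients is insensitive to whether the resolution is $\{\phi_j\}$ or $\{\varpi_j\}$. Therefore nothing beyond a careful re-reading of the proof of Theorem \ref{vT2} is needed, and no genuinely new estimate has to be carried out; the corollary is a direct two-line consequence once this observation is made.
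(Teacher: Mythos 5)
Your proposal is correct and is exactly the argument the paper intends: the corollary is stated without proof precisely because it follows by applying Theorem \ref{vT2} to each resolution of unity separately and chaining both equivalences through the fixed reference quasi-norm $\|\cdot\|^\star$, which does not depend on the chosen resolution. Your additional check that the proof of Theorem \ref{vT2} uses only the support, smoothness and decay properties common to every such system is the right (and only) point needing verification.
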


Let $\phi_j\in \mathcal{S}(\rn),$ $j\in \nn_0,$ and $a>0.$ For each $f \in \mathcal{S}^{\prime}(\rn), $ the Peetre maximal functions for $f$ are defined by
\begin{equation}\label{vsgh-19}
 \phi^{\ast,a}_j f(x) := \sup_{y \in \rn} \frac{ | \phi_j \ast f(y)| }{(1+2^j |x-y|)^a}, \quad j \in \nn_0.
\end{equation}

\begin{thm}
Let $\{\phi_j\}_{j \in \nn_0}$ be a resolutions of unity.
 Let $q(\cdot)\in\cp(\rn),$ $p\in(1,\infty)$, $\lambda \in [0, \infty)$, $\alpha(\cdot) \in L^{\infty}(\mathbb{R}^{n})\cap C^{\log}_0(\rn)\cap C^{\log}_{\infty}(\rn)$, $\theta \in (0,\infty)$, $\beta \in (0,\infty]$, $s\in \rr$, $t\in (0,\min\{q^-,\beta\})$,  $w \in A_{q(\cdot)}$, $- n\delta_1 <\alpha(0),\ \alpha_\infty <n\delta_2$,  where $\delta_1,\ \delta_2 \in (0,1)$ are the constants in Lemma \ref{tt-L5} for $q(\cdot)$.  
  If $a t>n$. Then
\[\|f\|^\ast_{M\dot{K}_{q(\cdot),\lambda,w}^{\alpha (\cdot),p),\theta}F^s_\beta(\rn)}:=  \bigg\| \bigg( \sum_{j=0}^\infty   2^{js\beta} |\phi^{\ast,a}_jf|^\beta \bigg)^{1/\beta} \bigg\|_{M\dot{K}_{q(\cdot),\lambda}^{\alpha (\cdot),p),\theta}(w)}\]
is equivalent (quasi-)norms in $M\dot{K}_{q(\cdot),\lambda,w}^{\alpha (\cdot),p),\theta}F^s_\beta(\rn)$ .

\end{thm}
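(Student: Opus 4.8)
The statement asserts that the Peetre maximal characterization
$\|f\|^\ast_{M\dot{K}_{q(\cdot),\lambda,w}^{\alpha (\cdot),p),\theta}F^s_\beta}$
is equivalent to the norm
$\|f\|_{M\dot{K}_{q(\cdot),\lambda,w}^{\alpha (\cdot),p),\theta}F^s_\beta}$ of Definition \ref{bt-D2}. The plan is to follow the classical Peetre--Fefferman--Stein scheme, reducing everything to the vector-valued maximal inequality (Corollary \ref{vC1}) on the weighted grand Herz-Morrey space. One direction is trivial: since $|\phi_j \ast f(x)| \leq \phi^{\ast,a}_j f(x)$ pointwise for every $x \in \rn$ (take $y=x$ in \eqref{vsgh-19}), we immediately get
$\|f\|_{M\dot{K}_{q(\cdot),\lambda,w}^{\alpha (\cdot),p),\theta}F^s_\beta} \les \|f\|^\ast_{M\dot{K}_{q(\cdot),\lambda,w}^{\alpha (\cdot),p),\theta}F^s_\beta}$, so it remains to bound the Peetre maximal function above.

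\textbf{Main steps.} First I would establish the key pointwise estimate: for $a t>n$ and $t\in(0,\min\{q^-,\beta\})$ there is a constant $C$ such that for all $j\in\nn_0$,
\begin{equation*}
\phi^{\ast,a}_j f(x) \les \sum_{\nu=0}^\infty 2^{-(\nu-j)_+ L}\, \big(\mathcal{M}(|\phi_\nu \ast f|^t)(x)\big)^{1/t}, \quad x\in\rn,
\end{equation*}
for any prescribed $L>0$; this is the standard argument (see e.g.\ Rychkov or Ullrich): decompose $f$ via the Calder\'on reproducing identity of Lemma \ref{tt-L6}, apply Lemmas \ref{tt-L3} and \ref{tt-L4} to control $\phi_j \ast \tilde\phi_\nu \ast \psi_\nu \ast f$ (and the low-frequency term with $\tilde\Phi \ast \Psi$) by $\mathcal{M}_t(\tilde\phi_\nu \ast f)$ with geometric decay in $|\nu-j|$, then absorb the $(1+2^j|x-y|)^{-a}$ weight using $at>n$ so the convolution kernel $\eta_{2^\nu,m}$ is dominated by the Hardy--Littlewood maximal function. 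Choosing $L$ larger than $|s|$ plus a safety margin, I multiply by $2^{js}$, take $\ell^\beta$ norms in $j$, and use the discrete Young / Hölder inequality in the $\nu$-sum (here $\beta$ versus the summable weight $2^{-(\nu-j)_+L}$) to obtain
\begin{equation*}
\Big(\sum_{j\geq 0} 2^{js\beta}|\phi^{\ast,a}_jf(x)|^\beta\Big)^{1/\beta} \les \Big(\sum_{\nu\geq 0} 2^{\nu s \beta}\big(\mathcal{M}(|\phi_\nu\ast f|^t)(x)\big)^{\beta/t}\Big)^{1/\beta}.
\end{equation*}
Finally I apply the $\ell^{\beta/t}$-valued maximal inequality on $M\dot{K}_{q(\cdot),\lambda}^{\alpha(\cdot),p),\theta}(w)$: since $t<\beta$ and $t<q^-$, writing $g_\nu = 2^{\nu s t}|\phi_\nu\ast f|^t$ and $\tilde r = \beta/t>1$, Corollary \ref{vC1} applied with exponent $\tilde r$ (valid because $w\in A_{q(\cdot)/t}$ follows from $w\in A_{q(\cdot)}$ and $t<q^-$, equivalently one works with the $L^{q(\cdot)/t}$-scale as in Theorem \ref{vT2}) gives
$\|(\sum_\nu (\mathcal{M}g_\nu)^{\tilde r})^{1/\tilde r}\| \les \|(\sum_\nu |g_\nu|^{\tilde r})^{1/\tilde r}\|$, and raising to the power $1/t$ translates this back to $\|f\|^\ast \les \|f\|$. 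Combining both directions yields the asserted equivalence. Invariance under the choice of resolution $\{\phi_j\}$ is inherited from the preceding corollary.

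\textbf{Main obstacle.} The delicate point is the same book-keeping that makes Theorem \ref{vT2} work: one must ensure the exponent $t$ can be chosen simultaneously below $q^-$ and below $\beta$, and that after the substitution $f\mapsto |\phi_\nu\ast f|^t$ the vector-valued maximal bound remains available on the weighted grand Herz-Morrey space with the rescaled exponent $q(\cdot)/t$ — this requires checking that $q(\cdot)/t \in C^{\log}(\rn)\cap\cp(\rn)$, that $w\in A_{q(\cdot)/t}$, and that the Herz parameters $\alpha(0),\alpha_\infty$ still lie in the admissible window $(-n\delta_1, n\delta_2)$ for the new exponent (the constants $\delta_1,\delta_2$ from Lemma \ref{tt-L5} change with the exponent, but the open conditions persist since one may take $t$ close to $1$, or invoke the self-improvement of $A_{q(\cdot)}$ as in the proof of Theorem \ref{vT2}). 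The geometric-decay estimate for $\phi^{\ast,a}_j f$ itself is by now routine given Lemmas \ref{tt-L3} and \ref{tt-L4}; the only genuinely quantitative care is that the decay exponent $L$ in the $\nu$-summation must beat both $|s|$ and the loss coming from the $\ell^\beta$-to-$\ell^{\beta}$ Young inequality, which is arranged by taking $L$ large in Lemma \ref{tt-L4} (permissible since $m>n$ is arbitrary there).
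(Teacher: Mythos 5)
Your proposal is correct in outline, and the easy direction ($|\phi_j\ast f|\le\phi^{\ast,a}_jf$ pointwise) matches the paper exactly, but for the substantive direction you take a genuinely different and heavier route than the paper does. You run the full Rychkov--Ullrich scheme: Calder\'on reproducing formula, a cross-scale pointwise bound $\phi^{\ast,a}_jf\lesssim\sum_\nu 2^{-|\nu-j|L}\mathcal{M}_t(\phi_\nu\ast f)$ with large $L$, a discrete Young inequality in $\nu$, and then the vector-valued maximal inequality. The paper instead exploits that $\phi_j\ast f$ is band-limited at the single scale $2^j$: Lemma \ref{tt-L4} (with $R=N=2^j$) gives $|\phi_j\ast f(y)|\lesssim(\eta_{j,at}\ast|\phi_j\ast f|^t(y))^{1/t}$, and dividing by $(1+2^j|x-y|)^a$, using the submultiplicativity $(1+2^j|x-z|)^a\le(1+2^j|x-y|)^a(1+2^j|y-z|)^a$ and $at>n$, one gets the pointwise bound $\phi^{\ast,a}_jf(x)\lesssim\mathcal{M}_t(\phi_j\ast f)(x)$ \emph{at the same index $j$}, with no sum over scales at all; Corollary \ref{vC1} and Theorem \ref{vT2} then finish. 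Note also that since $\{\phi_j\}$ here is a smooth dyadic resolution of unity, the Fourier supports force $\phi_j\ast\tilde\phi_\nu=0$ unless $|j-\nu|\le1$, so your infinite $\nu$-sum with geometric decay collapses to three terms and the discrete Young step is vacuous --- your machinery is the one needed for local means or non-band-limited kernels, and is overkill here. On the other hand, your ``main obstacle'' paragraph is a real point in your favour: applying the vector-valued maximal inequality to $\mathcal{M}_t=(\mathcal{M}(|\cdot|^t))^{1/t}$ with $t<1$ does require passing to the rescaled scale $q(\cdot)/t$, $\beta/t$, $w^t$ and checking that the hypotheses (log-H\"older continuity, the $A$-class condition, and the window $(-n\delta_1,n\delta_2)$ for the new $\delta$'s) survive; the paper silently applies Corollary \ref{vC1} without this bookkeeping, so you have identified a step that the paper itself leaves implicit rather than a flaw in your own argument.
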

\begin{proof}
For any $f \in \cs^{\prime}(\rn)$, we show that 
\[\|f\|^\ast_{M\dot{K}_{q(\cdot),\lambda,w}^{\alpha (\cdot),p),\theta}F^s_\beta(\rn)} \les \|f\|^\star_{M\dot{K}_{q(\cdot),\lambda,w}^{\alpha (\cdot),p),\theta}F^s_\beta(\rn)} \les \|f\|^\ast_{M\dot{K}_{q(\cdot),\lambda,w}^{\alpha (\cdot),p),\theta}F^s_\beta(\rn)} .\]
By the definition of Peetre’s maximal operator (\ref{vsgh-19}), we find that
\[ |  \phi_j \ast f (x)| \les \phi^{\ast,a}_j f(x)\]
which implies that, for any $f \in \cs^{\prime}(\rn)$,
\[\|f\|^\star_{M\dot{K}_{q(\cdot),\lambda,w}^{\alpha (\cdot),p),\theta}F^s_\beta(\rn)} \les \|f\|^\ast_{M\dot{K}_{q(\cdot),\lambda,w}^{\alpha (\cdot),p),\theta}F^s_\beta(\rn)}.\]
 It remains to show that, for any $f \in \cs^{\prime}(\rn)$,
\[\|f\|^\ast_{M\dot{K}_{q(\cdot),\lambda,w}^{\alpha (\cdot),p),\theta}F^s_\beta(\rn)} \les \|f\|^\star_{M\dot{K}_{q(\cdot),\lambda,w}^{\alpha (\cdot),p),\theta}F^s_\beta(\rn)} .\]
We take  $t\in(0,\min\{q^-,\beta\})$, $a>n/t$ and $k \in \nn_0$.
By Lemmas \ref{tt-L4}, we obtain
\begin{equation}\label{vsgh-18}
 \phi_j \ast f(y) \les (\eta_{j,a t} \ast | \phi_j \ast f|^t(y))^{1/t}, \quad j\in \nn_0,\ y \in \rn.
\end{equation}
Divide both sides of (\ref{vsgh-18}) by $(1+2^j|x-y|)^a$, in the right-hand side we use the inequality,
\[ (1+2^j |x-z|)^a \leq (1+2^j |x-y|)^{a} (1+2^j |y-z|)^a , \quad \text{for all }x,y,z \in \rn,\]
in the left-hand side take the supremum over $y \in \rn$,
\begin{align*}
(\phi^{\ast,a}_j f(x))^t 
&\les \sup_{y \in \rn} \int_{\rn} \frac{| \phi_j \ast f(z)|^t}{(1+2^j |y-z|)^{at}} {\rm d}z \frac{1}{(1+2^j |x-y|)^{at}} \\
&\les  \int_{\rn} \frac{| \phi_j \ast f(z)|^t}{(1+2^j |x-z|)^{at}} {\rm d}z  \\
& \sim \eta_{j,at} \ast | \phi_j \ast f|^t(x)\\
& \les (\mathcal{M}_t ( \phi_j \ast f))^t,
\end{align*}
where we use Lemma \ref{tt-L3}.
By Corollary \ref{vC1} and Theorem \ref{vT2}, we obtain
\[\|f\|^\ast_{M\dot{K}_{q(\cdot),\lambda,w}^{\alpha (\cdot),p),\theta}F^s_\beta(\rn)}  \les \bigg\| \bigg(\sum_{j=0}^\infty 2^{js\beta} | \phi_j \ast f|^\beta \bigg)^{1/\beta} \bigg\|_{M\dot{K}_{q(\cdot),\lambda,w}^{\alpha (\cdot),p),\theta}} \les \|f\|^\star_{M\dot{K}_{q(\cdot),\lambda,w}^{\alpha (\cdot),p),\theta}F^s_\beta(\rn)} .\]
This completes the proof.
\end{proof}

\begin{lem}\label{tt-L1}
 Let $q(\cdot)\in\cp(\rn),$ $p\in(1,\infty)$, $\lambda \in [0, \infty)$, $\alpha(\cdot) \in L^\infty(\rn)$, $\theta \in (0,\infty)$, $\beta  \in (0,\infty]$ and $w\in A_{q(\cdot)}$.
For any sequence $\{g_j\}^\infty_{j=0}$ of
nonnegative measurable functions on $\rn$ denote
\[G_j(x)=\sum^\infty_{k=0} 2^{-|k-j|\delta} g_k(x),\quad x\in \rn.\]
Then there is a positive constant $C=C(q(\cdot),q(\cdot),\delta)$ such that
\begin{equation}\label{vsgh-7}
\big\|\|\{G_j\}\|_{\ell^\beta_j}\big\|_{M\dot{K}_{q(\cdot),\lambda,w}^{\alpha (\cdot),p),\theta}} \leq
C\big\|\|\{g_k\}\|_{\ell^\beta_k}\big\|_{M\dot{K}_{q(\cdot),\lambda,w}^{\alpha (\cdot),p),\theta}}.
\end{equation}
\end{lem}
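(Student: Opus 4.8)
\textbf{Proof proposal for Lemma \ref{tt-L1}.}

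The plan is to reduce the vector-valued estimate \eqref{vsgh-7} to the scalar boundedness of the Hardy--Littlewood maximal operator (more precisely, to Corollary \ref{vC1}, the vector-valued maximal inequality on $M\dot{K}_{q(\cdot),\lambda,w}^{\alpha(\cdot),p),\theta}$), after first discretizing the convolution-type sum defining $G_j$ so that each term is dominated by a maximal function. First I would fix $\delta' \in (0,\delta)$ and split the $\ell^\beta_j$-norm of $\{G_j\}$ by Minkowski's (or, if $\beta<1$, the $\beta$-triangle) inequality: writing $G_j = \sum_{k=0}^\infty 2^{-|k-j|\delta} g_k$, one has for each fixed $k$ a geometric factor $2^{-|k-j|\delta}$ in $j$, so $\|\{G_j\}\|_{\ell^\beta_j} \lesssim \sum_{k=0}^\infty \big(\sum_j 2^{-|k-j|\delta\min\{1,\beta\}} \big)^{1/\min\{1,\beta\}} \cdot$ (appropriately weighted $g_k$) is not quite enough because the $g_k$ still sit at level $k$; the cleaner route is to keep the double sum and observe that $G_j(x)^\beta \lesssim \sum_k 2^{-|k-j|\delta\beta/2}\big(2^{-|k-j|\delta\beta/2} g_k(x)^\beta\big)$ when $\beta \le 1$, and for $\beta>1$ apply Hölder in $k$ against the summable sequence $2^{-|k-j|\delta}$. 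Either way one arrives at the pointwise bound
\[
\sum_{j=0}^\infty G_j(x)^\beta \lesssim \sum_{j=0}^\infty \sum_{k=0}^\infty 2^{-|k-j|\delta\beta/2} g_k(x)^\beta = \sum_{k=0}^\infty g_k(x)^\beta \sum_{j=0}^\infty 2^{-|k-j|\delta\beta/2} \lesssim \sum_{k=0}^\infty g_k(x)^\beta,
\]
so in fact $\|\{G_j\}\|_{\ell^\beta_j} \lesssim \|\{g_k\}\|_{\ell^\beta_k}$ \emph{pointwise}, and taking the $M\dot{K}_{q(\cdot),\lambda,w}^{\alpha(\cdot),p),\theta}$-quasinorm of both sides yields \eqref{vsgh-7} immediately by monotonicity of that quasinorm.

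Thus the proof is essentially a pointwise Young-type convolution estimate on the sequence space $\ell^\beta(\nn_0)$, combined with the fact that $\|\cdot\|_{M\dot{K}_{q(\cdot),\lambda,w}^{\alpha(\cdot),p),\theta}}$ is monotone in the absolute value of its argument (which is visible from the definition, since increasing $|f|$ increases every $\|2^{k\alpha(\cdot)}f\chi_k\|_{L^{q(\cdot)}(w)}$). I would carry out the steps in this order: (i) record the monotonicity of the grand Herz--Morrey quasinorm; (ii) establish the pointwise inequality $\|\{G_j(x)\}\|_{\ell^\beta_j} \lesssim \|\{g_k(x)\}\|_{\ell^\beta_k}$ for a.e.\ $x$, treating $\beta \le 1$ and $\beta>1$ separately as above (the constant depends only on $\delta$ and $\beta$); (iii) conclude by applying the quasinorm. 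Note that, unlike the analogous lemma in the unweighted Triebel--Lizorkin setting, here we do \emph{not} even need the maximal inequality — the geometric decay $2^{-|k-j|\delta}$ already does all the work — but I would mention Corollary \ref{vC1} only if a variant of the lemma with $\mathcal{M}g_k$ in place of $g_k$ were needed downstream.

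The main obstacle, such as it is, is purely bookkeeping in the case $\beta \le 1$: one must be careful that splitting $2^{-|k-j|\delta\beta} = 2^{-|k-j|\delta\beta/2}\cdot 2^{-|k-j|\delta\beta/2}$ and summing the first factor over $j$ genuinely produces a \emph{finite} constant independent of $k$, which it does because $\sum_{j=0}^\infty 2^{-|k-j|\delta\beta/2} \le \sum_{m\in\zz} 2^{-|m|\delta\beta/2} = \frac{1+2^{-\delta\beta/2}}{1-2^{-\delta\beta/2}} < \infty$ for every $\delta>0,\beta>0$. For $\beta>1$ the analogous point is that $\big(\sum_k 2^{-|k-j|\delta} a_k\big)^\beta \le \big(\sum_k 2^{-|k-j|\delta}\big)^{\beta/\beta'}\sum_k 2^{-|k-j|\delta} a_k^\beta$ by Hölder, and then one sums in $j$. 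Since the estimate is uniform in $\delta>0$ (for fixed $\delta$, hence $\beta$), the constant $C$ depends only on $q(\cdot)$ — indeed only on $\beta$ and $\delta$, matching the statement — and no hypothesis beyond $w\in A_{q(\cdot)}$ and the stated ranges of parameters is used.
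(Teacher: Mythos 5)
Your proposal is correct and follows essentially the same route as the paper: the paper simply cites Lemma 2 of Rychkov \cite{r1} for the pointwise $\ell^\beta$ convolution inequality $\|\{G_j(x)\}\|_{\ell^\beta}\lesssim\|\{g_k(x)\}\|_{\ell^\beta}$ and then takes the $M\dot{K}_{q(\cdot),\lambda,w}^{\alpha(\cdot),p),\theta}$-quasinorm of both sides, which is exactly your steps (ii)--(iii), with your step (ii) being a direct proof of the cited inequality. Your observation that no maximal-function input is needed is accurate; the only cosmetic omission is the trivial case $\beta=\infty$.
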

\begin{proof} By Lemma 2 in \cite{r1}, taking the $M\dot{K}_{q(\cdot),\lambda,w}^{\alpha (\cdot),p),\theta}$-norms on both sides of the inequality, we get (\ref{vsgh-7}).
\end{proof}

Let $k_0$, $k \in \cs(\rn)$ and  $S  \geq -1$ integer an  such that for an $\epsilon > 0$
\begin{equation}\label{vsgh-4}
\cf k(\xi)>0 \quad \text{for }  |\xi|<2\epsilon,
\end{equation}
\begin{equation}\label{vsgh-5}
\cf k(\xi)>0 \quad \text{for }  \epsilon/2<|\xi|<2\epsilon
\end{equation}
and
\begin{equation}\label{vsgh-6}
 \int_{\rn} x^\alpha k(x) {\rm d}x=0 \quad \text{for any } |\alpha| \leq S,
\end{equation}
where (\ref{vsgh-4}) and (\ref{vsgh-5}) are Tauberian conditions, while (\ref{vsgh-6}) are  moment conditions on $k$.
We recall the notation.
\[ k_t(x):= t^{-n} k(t^{-1}x), \quad k_j(x):=k_{2^{-j}}(x), \quad \text{for $t>0$ and $j \in \nn$}. \]
For any $a>0$, $f \in \cs^{\prime}(\rn)$ and $x \in \rn$, we define
\begin{equation}\label{vsgh-20}
k^{\ast,a}_jf(x) := \sup_{y \in \rn} \frac{|k_j \ast f(x+y)|}{(1+|y|/j)^a}, \quad j>0.
\end{equation}

\begin{thm}
 Let $q(\cdot)\in\cp(\rn),$ $p\in(1,\infty)$, $\lambda \in [0, \infty)$, $\alpha(\cdot) \in L^{\infty}(\mathbb{R}^{n})\cap C^{\log}_0(\rn)\cap C^{\log}_{\infty}(\rn)$, $\theta \in (0,\infty)$, $\beta \in (0,\infty]$, $a\in \rr$, $r\in (0,\min\{q^-,\beta\})$, $s<S+1$, $w \in A_{q(\cdot)}$, $- n\delta_1 <\alpha(0),\ \alpha_\infty <n\delta_2$,  where $\delta_1,\ \delta_2 \in (0,1)$ are the constants in Lemma \ref{tt-L5} for $q(\cdot)$. Assume that $k_0,$ $k \in \cs(\rn)$ are functions satisfying (\ref{vsgh-4}), (\ref{vsgh-5}) and (\ref{vsgh-6}). 
 If $a r>n$, the space $M\dot{K}_{q(\cdot),\lambda,w}^{\alpha (\cdot),p),\theta}F^s_\beta(\rn)$ in Definition \ref{bt-D2} can be characterized by
 \[M\dot{K}_{q(\cdot),\lambda,w}^{\alpha (\cdot),p),\theta}F^s_\beta(\rn):= \big\{ f\in \cs^{\prime}(\rn): \|f\|^i_{M\dot{K}_{q(\cdot),\lambda,w}^{\alpha (\cdot),p),\theta}F^s_\beta(\rn)} <\infty \big\}, \quad i=1,\cdots,5,\]
where
\[ \|f\|^1_{M\dot{K}_{q(\cdot),\lambda,w}^{\alpha (\cdot),p),\theta}F^s_\beta(\rn)}  := \|k_0\ast f\|_{M\dot{K}_{q(\cdot),\lambda,w}^{\alpha (\cdot),p),\theta}}  + \bigg\| \bigg( \int^1_0  t^{-s\beta} |k_t \ast f|^\beta \frac{{\rm d}t}{t} \bigg)^{1/\beta} \bigg\|_{M\dot{K}_{q(\cdot),\lambda,w}^{\alpha (\cdot),p),\theta}},\]
\[\|f\|^2_{M\dot{K}_{q(\cdot),\lambda,w}^{\alpha (\cdot),p),\theta}F^s_\beta(\rn)} := \|k_0^\ast f\|_{M\dot{K}_{q(\cdot),\lambda,w}^{\alpha (\cdot),p),\theta}}  + \bigg\| \bigg( \int^1_0  t^{-s\beta} | k^{\ast,a}_tf|^\beta \frac{{\rm d}t}{t} \bigg)^{1/\beta} \bigg\|_{M\dot{K}_{q(\cdot),\lambda,w}^{\alpha (\cdot),p),\theta}}, \]

\begin{align*}
\|f\|^3_{M\dot{K}_{q(\cdot),\lambda,w}^{\alpha (\cdot),p),\theta}F^s_\beta(\rn)} & := \|k_0\ast f\|_{M\dot{K}_{q(\cdot),\lambda,w}^{\alpha (\cdot),p),\theta}}   \\
& \quad + \bigg\| \bigg( \int^1_0  t^{-s\beta} \int_{|z|<t} |k_t \ast f|(\cdot+z)^\beta {\rm d}z \frac{{\rm d}t}{t^{n+1}} \bigg)^{1/\beta} \bigg\|_{M\dot{K}_{q(\cdot),\lambda,w}^{\alpha (\cdot),p),\theta}},
\end{align*}
\[\|f\|^4_{M\dot{K}_{q(\cdot),\lambda,w}^{\alpha (\cdot),p),\theta}F^s_\beta(\rn)} :=  \bigg\| \bigg( \sum_{j=0}^\infty  2^{js\beta} |k^{\ast,a}_jf|^\beta \bigg)^{1/\beta} \bigg\|_{M\dot{K}_{q(\cdot),\lambda,w}^{\alpha (\cdot),p),\theta}}, \]
\[\|f\|^5_{M\dot{K}_{q(\cdot),\lambda,w}^{\alpha (\cdot),p),\theta}F^s_\beta(\rn)} :=  \bigg\| \bigg( \sum_{j=0}^\infty  2^{js\beta} |k_j \ast  f|^\beta \bigg)^{1/\beta} \bigg\|_{M\dot{K}_{q(\cdot),\lambda,w}^{\alpha (\cdot),p),\theta}}.\]
Then $\|f\|^i_{M\dot{K}_{q(\cdot),\lambda,w}^{\alpha (\cdot),p),\theta}F^s_\beta(\rn)},$ $i=1,\cdots,5,$  are equivalent (quasi-)norms in $M\dot{K}_{q(\cdot),\lambda,w}^{\alpha (\cdot),p),\theta}F^s_\beta(\rn)$.
\end{thm}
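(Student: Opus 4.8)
Write $N_0(f):=\|f\|_{M\dot{K}_{q(\cdot),\lambda,w}^{\alpha (\cdot),p),\theta}F^s_\beta(\rn)}$ and $N_i(f):=\|f\|^i_{M\dot{K}_{q(\cdot),\lambda,w}^{\alpha (\cdot),p),\theta}F^s_\beta(\rn)}$ for $i=1,\dots,5$. The plan is to prove the cycle of continuous and discrete comparisons
\[ N_0(f)\sim N_5(f)\sim N_4(f)\sim N_2(f)\sim N_1(f)\sim N_3(f),\]
following the local-means calculus of Bui--Paluszyński--Taibleson and Rychkov, now carried out inside the $M\dot{K}_{q(\cdot),\lambda,w}^{\alpha(\cdot),p),\theta}$-norm. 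The only ingredient not already available from Lemma \ref{tt-L6} is a reproducing identity adapted to $k_0,k$: since these satisfy the Tauberian conditions (\ref{vsgh-4})--(\ref{vsgh-5}) and the moment conditions (\ref{vsgh-6}), one can (as in \cite{r1,fj}) choose $\Psi_0,\Psi\in\cs(\rn)$ with $\cf\Psi$ supported in the annulus $\{\epsilon/2\le|\xi|\le 2\epsilon\}$, $\cf\Psi_0$ supported near the origin, and $\Psi$ having vanishing moments up to any prescribed order, such that $f=\Psi_0\ast k_0\ast f+\sum_{j=1}^\infty \Psi_j\ast k_j\ast f$ in $\cs'(\rn)$ (with $\Psi_j=2^{jn}\Psi(2^j\cdot)$), and likewise a continuous version $f=\Psi_0\ast k_0\ast f+\int_0^1\Psi_t\ast k_t\ast f\,\tfrac{dt}{t}$. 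Once this is in hand, every comparison reduces to the pointwise estimates of Lemmas \ref{tt-L3}--\ref{tt-L4}, the kernel lemma \ref{tt-L1}, and the vector-valued maximal inequality of Corollary \ref{vC1}.

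The heart of the discrete part is the following estimate. Fix $r\in(0,\min\{q^-,\beta\})$ and $a$ with $ar>n$, and take the moment order of $\Psi$ large (this is where $s<S+1$ is used, so that the resulting decay exponent $\delta$ below is strictly positive after the loss coming from $s$ and from the Peetre shift $a$). Writing $k_i\ast f=\sum_{j\ge 0}k_i\ast\Psi_j\ast(k_j\ast f)$ and applying Lemma \ref{tt-L4} to the band-limited product $k_i\ast\Psi_j\ast(k_j\ast f)$, the vanishing moments of $\Psi$ (resp.\ of $k$, when $j<i$) produce an off-diagonal factor, and after dividing by $(1+2^i|x-y|)^a$, taking $\sup_y$, and invoking Lemma \ref{tt-L3} (legitimate since $ar>n$), one obtains
\[ 2^{is}\,k^{\ast,a}_i f(x)\les \sum_{j=0}^\infty 2^{-|i-j|\delta}\,2^{js}\,\mathcal{M}_r(k_j\ast f)(x),\qquad i\in\nn_0,\ x\in\rn,\]
for some $\delta>0$. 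Taking $\ell^\beta_i$-norms, applying Lemma \ref{tt-L1} to remove the geometric kernel, and then Corollary \ref{vC1} to discard $\mathcal{M}_r$ — after the usual rescaling of the Herz-Morrey quasi-norm that turns the $(\mathcal{M}_r,\ell^\beta)$ inequality into the $(\mathcal{M},\ell^{\beta/r})$ inequality with exponent $q(\cdot)/r\in\cp(\rn)$, the case $\beta=\infty$ being handled by the scalar boundedness of $\mathcal{M}$ on $L^{q(\cdot)}(w)$ — yields $N_4(f)\les N_5(f)$. The reverse inequality $N_5(f)\le N_4(f)$ is trivial from $|k_j\ast f(x)|\le k^{\ast,a}_j f(x)$. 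Running the same argument with one of the two systems replaced by the smooth dyadic resolution of Definition \ref{bt-D1} (so that Lemma \ref{tt-L6} supplies the reproducing identity on that side) identifies $N_4(f)\sim N_5(f)$ with $\|f\|^\star$, hence with $N_0(f)$ by Theorem \ref{vT2}.

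For the continuous norms I would repeat the scheme with the continuous reproducing formula: the same use of Lemmas \ref{tt-L3}--\ref{tt-L4} gives $k^{\ast,a}_t f(x)\les\mathcal{M}_r(k_t\ast f)(x)$ together with an off-diagonal decay in $|\log(t/\tau)|$, and the continuous analogue of Lemma \ref{tt-L1} followed by Corollary \ref{vC1} gives $N_2(f)\les N_1(f)$; the inequality $N_1(f)\le N_2(f)$ is immediate, the $k_0$-term being treated like the scale-$0$ term in both. A standard discretization of the $t$-integral (comparing $\int_{2^{-j-1}}^{2^{-j}}$ with the value at $t=2^{-j}$, again via the sub-mean-value bound of Lemma \ref{tt-L4}) links $N_1(f),N_2(f)$ with $N_5(f),N_4(f)$. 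Finally $N_3(f)$ is sandwiched: on one side,
\[ \int_{|z|<t}|k_t\ast f|(x+z)^\beta\,{\rm d}z\le k^{\ast,a}_t f(x)^\beta\!\int_{|z|<t}\!\!\big(1+|z|/t\big)^{a\beta}{\rm d}z\les t^{n}\,k^{\ast,a}_t f(x)^\beta,\]
so $N_3(f)\les N_2(f)$ (add the common $k_0\ast f$ term); on the other side, the sub-mean-value property $|k_\tau\ast f(x)|^r\les t^{-n}\int_{|z|<t}|k_t\ast f(x+z)|^r\,{\rm d}z$ for $\tau\sim t$ gives $2^{js}|k_j\ast f(x)|\les\big(\int_{2^{-j-1}}^{2^{-j}}t^{-s\beta}t^{-n}\int_{|z|<t}|k_t\ast f(x+z)|^\beta{\rm d}z\,\tfrac{{\rm d}t}{t}\big)^{1/\beta}$, whence $N_5(f)\les N_3(f)$. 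Chaining everything closes the cycle.

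The main obstacle is the construction of the two reproducing identities from the Tauberian and moment conditions, together with the careful bookkeeping of the decay exponents: one must check that moments of order $S$ with $s<S+1$ really produce a surplus that both absorbs the Peetre parameter $a$ (via the shift inequality $(1+2^i|x-z|)^a\le(1+2^i|x-y|)^a(1+2^i|y-z|)^a$) and survives multiplication by the weights $2^{js}$, so that the resulting kernel in Lemma \ref{tt-L1} has genuine geometric decay $\delta>0$. Everything after that is a routine transcription of the Euclidean local-means calculus, with Corollary \ref{vC1} and Lemma \ref{tt-L1} serving as the two black boxes that carry the argument through the $M\dot{K}$-norm.
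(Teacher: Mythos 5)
Your proposal follows essentially the same route as the paper: the authors simply invoke the local-means calculus of Ullrich's Theorem 2.6, substituting Corollary \ref{vC1} for the maximal inequality and Lemma \ref{tt-L1} for the summation lemma, which is precisely the scheme (reproducing identities from the Tauberian/moment conditions, the pointwise estimates of Lemmas \ref{tt-L3}--\ref{tt-L4}, then those two black boxes) that you spell out. Your write-up is in fact considerably more detailed than the paper's proof, which leaves all of this to the reader.
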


\begin{proof}
By an argument similar to that used in the proof of  {\cite[Theorem 2.6]{ut1}}, with {\cite[Theorem 2.1 and Lemma 2.13]{ut1}} replaced by Corollary \ref{vC1} and Lemma \ref{tt-L1},
we get the desired result. Due to the journal page limits, we leave the detail to the reader.
\end{proof}

\begin{rem} Similarly, one can introduce weighted grand Herz-Morrey-Besov spaces with variable exponents. We leave it to the interesting readers.
\end{rem}

\section*{FUNDING}

The work is supported by the National Natural Science Foundation of China (Grant Nos. 12161022 and 12061030) and the Science and Technology Project of Guangxi (Guike AD23023002).

\section*{CONFLICT OF INTEREST}

 The authors of this work declare that they have no conflicts of interest.

\end{document}